\newtheorem{theorem}{Theorem}[section]
\newtheorem{lemma}[theorem]{Lemma}
\newtheorem{corollary}[theorem]{Corollary}
\theoremstyle{definition}
\theoremstyle{remark}
\newtheorem{remark}[theorem]{Remark}
\numberwithin{equation}{section}
\begin{document}

\title{Curvature pinching estimate under the Laplacian $G_{2}$ flow}


\author{Chuanhuan Li}
\address{School of Mathematical Sciences, Laboratory of Mathematics and Complex Systems, Beijing Normal University, Beijing 100875, China \newline
${\quad}$Shanghai Institute for Mathematics and Interdisciplinary Sciences, Shanghai 200433, China}
\curraddr{}
\email{chli@mail.bnu.edu.cn,chli@simis.cn}
\thanks{}

\author{Yi Li$^{\ast}$}
\address{Shanghai Institute for Mathematics and Interdisciplinary Sciences, Shanghai 200433, China \newline
${\quad}$ Fudan University, Shanghai 200433, China}
\curraddr{}
\email{yilicms@simis.cn, yilicms@gmail.com}
\thanks{$^{\ast}$Corresponding author}

\subjclass[2020]{Primary {53E99; 53C25}}

\keywords{}

\date{}

\dedicatory{}

\begin{abstract}
In this paper, we derive a pinching estimate {for} the traceless Ricci curvature in terms of scalar curvature and the $C^{1}$  norm of the Weyl tensor under the Laplacian $G_{2}$ flow for closed $G_{2}$ structures. Then we apply this estimate to study the long time existence of the Laplacian $G_{2}$ flow and prove that the $C^{1}$  norm of the Weyl tensor has to blow up at least at a certain rate under bounded scalar curvature.

{{\bf Keywords} Laplacian $G_{2}$ flow; curvature pinching estimate; long time existence}
\end{abstract}

\maketitle

\section{Introduction}

In order to study a smooth $7$-manifold $M$ admitting a {torsion-free} $G_{2}$-structure, Bryant \cite{Bryant 2006} introduced the following Laplacian $G_2$ flow for closed $ G_{2}$-structure:
\begin{equation}
  \left \{
       \begin{array}{rl}
          \partial_{ t}\varphi(t)&=\Delta_{\varphi(t)}\varphi(t),\\
           \varphi(0)&=\varphi,
       \end{array}
  \right.
  \label{The closed Laplacian flow}
\end{equation}
where $\Delta_{\varphi(t)}\varphi(t)=dd^{\ast}_{\varphi(t)}\varphi(t)+d^{\ast}_{\varphi(t)}d\varphi(t)$ is the Hodge Laplacian of $g(t)$ and $\varphi$ is an initial  closed $G_{2}$-structure({a positive closed three-form).} Here $g(t)$ is the Riemannian metric {algebraically determined by} $\varphi(t)$. Since $\Delta_{\varphi}\varphi=dd^{\ast}_{\varphi}\varphi$ for a closed $G_{2}$-structure $\varphi$, we see that the closedness of $\varphi(t)$ is preserved along the Laplacian flow $\eqref{The closed Laplacian flow}$. 

The flow $\eqref{The closed Laplacian flow}$ can be viewed as the gradient flow for the Hitchin functional {when the variations are restricted to the cohomology class of the closed $G_{2}$-structure, where Hitchin functional} introduced by Hitchin \cite{Hitchin 2000}
$$\mathcal{H}:[\overline{\varphi}]_{+}\longrightarrow\mathbb{R}^{+},\ \varphi\longmapsto\frac{1}{7}\int_{M}\varphi\wedge\psi=\int_{M}\ast_{\varphi}1.$$
Here $\overline{\varphi}$ is a closed $G_{2}$-structure on $M$ and  $[\overline{\varphi}]_{+}$ is the open subset of the cohomology class $[\overline{\varphi}]$ consisting of $G_{2}$-structures. Any critical point of $\mathcal{H}$ gives rise to a torsion-free $G_{2}$-structure {when $7$-manifold $M$ is compact.} 

For more details about Laplacian flow on some special $7$-manifolds, Laplacian solitons and other flows on $G_{2}$-structures, see \cite{Bryant-Xu 2011, Chen Shi-estimates, Fer-fin 2016, Fine-Yao 2018, Fino 2021, Grigorian-2013, Grigorian-2016, Huang-wang-yao 2018, KL 2021, LLX25,local curvature of G2, Lin-2013, Lotay-Wei Shi-estimate, L-W 2019, L-W 2019 2, Lotay 2022, Nico 2022, Weiss-Witt 2012, Weiss-Witt 2012b}.

The stationary points of the Laplacian flow $\eqref{The closed Laplacian flow}$ are harmonic $\varphi$, which on a compact manifold are the torsion-free $G_{2}$-structure. Hence the long time behaviour of the flow is an important topic. In \cite{Bryant-Xu 2011}, applying De Turck's trick and Hamilton's Nash-Moser inverse function theorem, Bryant and Xu proved the short time existence for $\eqref{The closed Laplacian flow}$. In 2017, Lotay and Wei \cite{Lotay-Wei Shi-estimate} proved the long time existence for the Laplacian $G_{2}$ flow $\eqref{The closed Laplacian flow}$ under bounded Riemann curvature or Ricci curvature. In 2021, the second author \cite{local curvature of G2} gave {an alternative proof via local curvature estimates, showing that the flow persists as long as the Ricci curvature remains bounded.}

The analogue of long time behaviour of Ricci flow was proved by Hamilton \cite{Hamilton 1982} for bounded Riemann curvature and \u{S}e\u{s}um \cite{Sesum 2005} for bounded Ricci curvature (Kotschwar-Munteanu-Wang \cite{K-M-Wang 2016} gave another proof in this case).  The Riemann curvature gap estimate and the Ricci curvature gap estimate for finite time singular solutions of Ricci flow were given, respectively, by Lemma 8.7 in \cite{C-L-N Ricci flow} and Theorem 1.1 in \cite{Wang 2012}(another proof for Ricci curvature gap estimate was given by \cite{K-M-Wang 2016}). {Whether} the Ricci flow will exist as long as the scalar curvature remains bounded is still an open question. For the K\"ahler-Ricci flow \cite{zhang-2010} or type-I Ricci flow \cite{Ender-Top 2011}, this conjecture was settled. For the general case, some partial results were carried out in \cite{Curvature pinching by Cao}, {and Li \cite{Li24} weakened the assumption of \cite{Curvature pinching by Cao}.} Similarly, the analogue of long time behaviour for Ricci-harmonic flow, see \cite{Cheng-zhu 2013, RHF with bounded R I, RHF with bounded R, local curvature of RHF, List-2008, Muller-2012}, while for other flows, see \cite{LY, L2022}.

${}$

Lotay and Wei \cite{Lotay-Wei Shi-estimate} propose the following question (see also \cite{Lotay, Wei}):
\begin{quote}
    {\bf Question 1:} {\it whether the Laplacian $G_{2}$ flow on closed $G_{2}$-structures will exist as long as the torsion tensor or scalar curvature remains uniformly bounded.}
\end{quote}
Because the flow $\eqref{The closed Laplacian flow}$ preserves the closedness of $\varphi(t)$, we see that the scalar curvature of $g(t)$ is always nonpositive. Hence, the question of Lotay and Wei is equivalent to 
\begin{quote}
    {\bf Question 2:} {\it whether the Laplacian $G_{2}$ flow on closed $G_{2}$-structures will exist as long as the scalar curvature remains uniformly bounded from below.} 
\end{quote}
Fine and Yao \cite{Fine-Yao 2018} obtained the long time existence of the Laplacian $G_{2}$ flow $\eqref{The closed Laplacian flow}$ with bounded scalar curvature when $M=X\times \mathbb{T}^{3}$, where $X$ is a $4$-manifold with hypersymplectic structure and $\mathbb{T}^{3}$ is the standard flat 3-torus. Picard and Suan \cite{Picard-Suan} proved that the Laplacian $G_{2}$ flow $\eqref{The closed Laplacian flow}$ exists for all time when $M=\mathbb{T}^{3}\times X^{4}$ or $M=\mathbb{S}^{1}\times X^{6}$ with $X^{4},X^{6}$ are compact Calabi–Yau $2,3$-folds. For more general $7$-manifolds, the second author \cite{local curvature of G2} computed the evolution equation for the scalar curvature, giving a weak bound for the scalar curvature which can not be used to answer Lotay-Wei's question.

In this paper, we will give some partial results about {\bf Question 1} or {\bf Question 2}. Using the curvature pinching estimates method in \cite{Curvature pinching by Cao, Hamilton 1982} and  maximum principle, we have

\begin{theorem}\label{theorem1.1}
    Let $(M,\varphi(t))_{t\in[0,T)}$ be the solution of the Laplacian $G_{2}$ flow $\eqref{The closed Laplacian flow}$ on a closed $7$-dimensional manifold $M$ with $T<+\infty$, where $g(t)$ is the Riemannian metric associated with $\varphi(t)$. If there exists a positive constant $c$ independent on time $t$ such that $R(g(t))+c>0$, then there exist some constants {$C_{1}=C_{1}(c,\varphi(0))\geq0$} and $C_{2}=C_{2}(c)\geq0$, such that for all $t\in[0,T)$, one has 
    \begin{align}
        \frac{\displaystyle{\left|{\rm Ric}(g(t))-\frac{1}{7}R(g(t))\cdot g(t)\right|_{g(t)}}}{R(g(t))+c}\leq C_{1}+C_{2}\max_{M\times[0,t]}\frac{|W(g(t))|_{C^{1}(M,g(t))}}{R(g(t))+c}.
    \end{align}
    where  $W(g(t))$ is the Weyl tensor of $g(t)$.
\end{theorem}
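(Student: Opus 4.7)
The plan is to adapt the Cao-style pinching strategy from \cite{Curvature pinching by Cao} to the Laplacian $G_{2}$ flow by running the maximum principle on the scale-invariant ratio
$$F(x,t)\;:=\;\frac{\bigl|{\rm Ric}(g(t))-\tfrac{1}{7}R(g(t))g(t)\bigr|^{2}_{g(t)}}{(R(g(t))+c)^{2}}.$$
Since $R+c>0$ by hypothesis and $M$ is closed, $F$ is smooth and attains its maximum on each time slice, and the desired estimate is equivalent to the bound
$\sqrt{F(t)}\le C_{1}+C_{2}\max_{M\times[0,t]}|W|_{C^{1}}/(R+c).$

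The first task is to derive the evolution equation for $F$ along the flow. From the evolutions of $g_{ij}$, $R$, and ${\rm Ric}$ under $\partial_{t}\varphi=\triangle_{\varphi}\varphi$ recorded in \cite{Lotay-Wei Shi-estimate,local curvature of G2}, a standard Bochner-type computation yields
\begin{equation*}
(\partial_{t}-\Delta)|\mathring{\rm Ric}|^{2}\;=\;-2|\nabla\mathring{\rm Ric}|^{2}+\mathcal{Q}(\mathring{\rm Ric},{\rm Rm})+\mathcal{T},
\end{equation*}
where $\mathcal{Q}$ is a cubic curvature coupling and $\mathcal{T}$ collects the torsion--curvature remainders specific to the $G_{2}$ setting. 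The key algebraic step is to decompose ${\rm Rm}$ via the Weyl--Schouten splitting in dimension seven, which yields
$|\mathcal{Q}|\le C\bigl(|W||\mathring{\rm Ric}|^{2}+|\mathring{\rm Ric}|^{3}+|R||\mathring{\rm Ric}|^{2}\bigr),$
while the closed $G_{2}$ identity $|T|^{2}=-R$ converts every contribution to $\mathcal{T}$ into either $|R||\mathring{\rm Ric}|^{2}$ or a gradient term absorbable into $-2|\nabla\mathring{\rm Ric}|^{2}$.

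Applying the quotient rule together with the scalar evolution produces a differential inequality of the schematic form
$$(\partial_{t}-\Delta)F+\frac{2|\nabla\mathring{\rm Ric}|^{2}}{(R+c)^{2}}\;\le\;C\frac{|W|}{R+c}F+\frac{4}{R+c}\bigl\langle\nabla F,\nabla R\bigr\rangle+\text{l.o.t.}$$
The once-contracted second Bianchi identity relates $\nabla R$ and $\nabla\mathring{\rm Ric}$ to $\nabla^{l}W_{ijkl}$, so applying Cauchy--Schwarz to the cross term allows it to be absorbed into the good gradient term at the cost of a remainder proportional to $|\nabla W|^{2}/(R+c)^{2}$; this is precisely what produces the $C^{1}$ norm of $W$ in the final estimate rather than only $|W|_{C^{0}}$. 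Evaluating the resulting inequality at a spacetime maximum of $F$, where $\nabla F=0$ and $(\partial_{t}-\Delta)F\ge 0$, reduces everything to an algebraic inequality in $F$ that, after completing the square, yields $\sqrt{F}\le C_{1}+C_{2}\max|W|_{C^{1}}/(R+c)$ with $C_{1}=\max\{c_{1},2C_{2}^{2}+1\}$ coming from the initial value of $F$ together with the quadratic-completion constant. The main technical obstacle will be the careful bookkeeping of the torsion remainders $\mathcal{T}$ and the $T$--$\nabla{\rm Rm}$ couplings, which have no Ricci flow analogue and which must be shown not to spoil the pure $|W|_{C^{1}}$ dependence on the right-hand side or the structure of the coefficients $C_{1},C_{2}$.
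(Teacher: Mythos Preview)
Your overall strategy matches the paper's (a Cao-type pinching argument on $F=|E|^{2}/\widetilde{R}^{2}$ with $\widetilde{R}=R+c$), but there is a genuine gap in how you handle the torsion remainders, and your schematic differential inequality is missing the term that actually makes the argument close.

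The point of the pinching argument is that the evolution of $F$ contains a \emph{negative quartic} term $-\widetilde{R}F^{2}$ (coming from the $-2\gamma|\widetilde{\rm Ric}|^{4}/\widetilde{R}^{\gamma+1}$ produced by the quotient rule together with the $+2|\widetilde{\rm Ric}|^{2}$ in $\partial_{t}\widetilde{R}$). At a spacetime maximum this term is what forces an algebraic bound on $F$; your displayed inequality $(\partial_{t}-\Delta)F\le C\frac{|W|}{R+c}F+\cdots$ has no such term, and from it no bound on $F$ follows. More seriously, your claim that ``the closed $G_{2}$ identity $|T|^{2}=-R$ converts every contribution to $\mathcal{T}$ into either $|R||\mathring{\rm Ric}|^{2}$ or a gradient term'' is not correct. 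The scalar evolution contains $-4\nabla^{j}T_{im}\nabla^{i}T^{m}{}_{j}$, and since $\nabla T={\rm Rm}\ast\varphi+T\ast T\ast\varphi$ this produces a genuine $|{\rm Rm}|^{2}$ with a definite coefficient; after multiplying by $|\widetilde{\rm Ric}|^{2}/\widetilde{R}^{3}$ in the quotient, this yields a \emph{positive} $+C\widetilde{R}F^{2}$ that competes with the good $-2\widetilde{R}F^{2}$. The paper has to compute these coefficients explicitly (using $|{\rm Rm}|^{2}=|W|^{2}+\tfrac{4}{5}|{\rm Ric}|^{2}+\cdots$ and the vanishing of the $\psi$-contracted Ricci--Ricci term) to show the net coefficient is still negative, namely $-\tfrac{1}{2}\widetilde{R}F^{2}$. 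Treating $\mathcal{T}$ as lower-order loses this and the argument does not close.

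A second, smaller issue: the good gradient term that appears naturally from the quotient rule is $-\tfrac{2}{\widetilde{R}^{4}}|\widetilde{R}\nabla\widetilde{\rm Ric}-\nabla\widetilde{R}\cdot\widetilde{\rm Ric}|^{2}$, not $-2|\nabla\mathring{\rm Ric}|^{2}/(R+c)^{2}$, and it is this weighted term that absorbs the $\nabla{\rm Rm}$ contributions coming from $\nabla^{2}T$ in the $J$-block. The cross term $\tfrac{2}{\widetilde{R}}\langle\nabla F,\nabla\widetilde{R}\rangle$ is simply kept (it vanishes at the maximum) rather than being absorbed via the Bianchi identity as you propose. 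So the mechanism producing the $|\nabla W|$ dependence is the $\nabla^{2}\widehat{T}$ terms in $J$, not the $\langle\nabla F,\nabla R\rangle$ cross term.
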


{In fact, Cleyton and Ivanov \cite{CI08} showed that the  Weyl tensor can be decomposed into the following three parts by $G_{2}$ structure:(see also \cite{DGK25} for details)
$$W=W_{77}+W_{64}+W_{27}.$$
Thus, it is worth investigating which part, together with the scalar curvature,  controls the Ricci curvature in Theorem \ref{theorem1.1}.}

{From Theorem \ref{theorem1.1}, we give a partial result about {\bf Question 1} or {\bf Question 2}:}
\begin{theorem}{\bf (Partially answer {\bf Question 1} or {\bf 2})}\label{theorem1.2}
     Let $(M,\varphi(t))_{t\in[0,T)}$ be the solution of the Laplacian $G_{2}$ flow $\eqref{The closed Laplacian flow}$ on a closed $7$-dimensional manifold $M$ with $T<+\infty$, where $g(t)$ is the Riemannian metric associated with $\varphi(t)$. Either one has
     $$\liminf_{t\rightarrow T}\left(\min_{M} R(g(t))\right)=-\infty$$
     or (for some positive constant $c$)
     $$R(g(t))+c>0 \ \text{on} \ M\times[0,T) \ \ \ \text{but}\ \ \limsup_{t\rightarrow T}\left(\max_{M}\frac{|W(g(t))|_{C^{1}(M,g(t))}}{R(g(t))+c}\right)=+\infty.$$
\end{theorem}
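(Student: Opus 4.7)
The plan is to argue by contradiction, combining Theorem~\ref{theorem1.1} with the known long-time existence of the flow under bounded Ricci curvature. Throughout, $T$ is interpreted as the maximal existence time of the solution.

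First, I would unpack the failure of both alternatives. The failure of the first alternative gives $\liminf_{t\to T}\min_{M}R(g(t))>-\infty$; combined with continuity of $R(g(t))$ on $M\times[0,T']$ for any $T'<T$ and compactness of $M$, this yields a single constant $c>0$ with $R(g(t))+c>0$ on all of $M\times[0,T)$. This is the constant that enters the second alternative.

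With this $c$ fixed, the failure of the second alternative means
\[
N:=\sup_{t\in[0,T)}\max_{M}\frac{|W(g(t))|_{C^{1}(M,g(t))}}{R(g(t))+c}<+\infty.
\]
Since the Laplacian $G_{2}$ flow preserves the closedness of $\varphi(t)$, the associated scalar curvature satisfies $R(g(t))\leq 0$, so $R(g(t))+c\leq c$. Hence $|W(g(t))|_{C^{1}}$ is itself uniformly bounded on $M\times[0,T)$. Inserting the bound for $N$ into Theorem~\ref{theorem1.1} yields
\[
\left|{\rm Ric}(g(t))-\tfrac{1}{7}R(g(t))\,g(t)\right|_{g(t)}\leq (R(g(t))+c)(C_{1}+C_{2}N)\leq c(C_{1}+C_{2}N),
\]
and combining with $-c\leq R(g(t))\leq 0$ produces a uniform bound on $|{\rm Ric}(g(t))|_{g(t)}$ over $M\times[0,T)$.

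The final step is to invoke the long-time existence theorem of Lotay--Wei \cite{Lotay-Wei Shi-estimate} (or the local curvature estimates of \cite{local curvature of G2}): a uniform Ricci bound up to $T$ forces $\varphi(t)$ to extend smoothly past $T$, contradicting the maximality of $T$. I do not expect any genuinely difficult step here, since all the real analytic work has already been done inside Theorem~\ref{theorem1.1}. The only careful bookkeeping is to use the nonpositivity of $R$ to upgrade a bound on the scale-invariant ratio $|W|_{C^{1}}/(R+c)$ to a bound on $|W|_{C^{1}}$ itself, and to secure a single $c$ that works uniformly on $M\times[0,T)$ (rather than in a $t$-dependent way) before applying Theorem~\ref{theorem1.1}.
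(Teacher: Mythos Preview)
Your argument is correct and follows essentially the same route as the paper: assume both alternatives fail, use the lower bound on $R$ to fix $c$, feed the resulting uniform bound on $|W|_{C^{1}}/(R+c)$ into Theorem~\ref{theorem1.1} to bound the traceless Ricci tensor, and contradict finite-time existence. The only cosmetic difference is that the paper passes from the bound on $E$ (together with the bounds on $W$ and $R$) to a bound on the full Riemann tensor via the curvature decomposition and then invokes the $\mathrm{Rm}$-version of the extension theorem in \cite{Lotay-Wei Shi-estimate}, whereas you stop at a Ricci bound and invoke the Ricci-version; both are available in the cited references and the distinction is immaterial.
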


\begin{remark}\label{remark1.3}
    From \cite{local curvature of G2,Lotay-Wei Shi-estimate} or (2.15), we get $R(g(t))=-|\mathbf{T}(t)|^{2}_{g(t)}\leq0$, then we can use the torsion form $\mathbf{T}(t)$ to replace $R(g(t))$.
\end{remark}

\begin{corollary}\label{corollary1.4}
    Let $(M,\varphi(t))_{t\in[0,T)}$ be the solution of the Laplacian $G_{2}$ flow $\eqref{The closed Laplacian flow}$ on a closed $7$-dimensional manifold $M$ with $T<+\infty$, where $g(t)$ is the Riemannian metric associated with $\varphi(t)$. If the scalar curvature $R(g(t))$ and the $C^{1}$  norm of Weyl tensor  $W(g(t))$ are both uniformly bounded, then the solution $\varphi(t)$ can be extended past time $T$.
\end{corollary}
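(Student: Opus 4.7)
The plan is to deduce the corollary directly from Theorem \ref{theorem1.2}, which provides a clean dichotomy for finite-time singularities: at the maximal existence time $T$, either the scalar curvature is unbounded below, or (after translating the scalar curvature by a suitable positive constant $c$) the ratio $|W(g(t))|_{C^{1}}/(R(g(t))+c)$ blows up. Under the hypotheses both alternatives are ruled out, so the solution must extend past $T$.

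In detail, since $R(g(t))$ is uniformly bounded on $M\times[0,T)$, we have
\[
\liminf_{t\to T}\Bigl(\min_{M} R(g(t))\Bigr)>-\infty,
\]
excluding the first alternative. Moreover, the uniform bound on $R$ allows us to pick a positive constant $c$ with $R(g(t))+c>0$ on $M\times[0,T)$, and in fact $R(g(t))+c$ is then pinched between two positive constants. Combined with the assumed uniform bound on $|W(g(t))|_{C^{1}(M,g(t))}$, this makes the ratio $|W(g(t))|_{C^{1}(M,g(t))}/(R(g(t))+c)$ uniformly bounded on $M\times[0,T)$, so the second alternative of Theorem \ref{theorem1.2} is also excluded. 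Hence $\varphi(t)$ extends past $T$.

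Equivalently, one can argue via Theorem \ref{theorem1.1}: the pinching estimate
\[
\frac{\bigl|{\rm Ric}(g(t))-\tfrac{1}{7}R(g(t))\,g(t)\bigr|_{g(t)}}{R(g(t))+c}\leq C_{1}+C_{2}\max_{M\times[0,t]}\frac{|W(g(t))|_{C^{1}(M,g(t))}}{R(g(t))+c}
\]
together with the two-sided bound on $R(g(t))+c$ and the bound on $|W|_{C^{1}}$ yields a uniform bound on the traceless Ricci tensor, and hence on the full Ricci tensor. One then invokes the long-time existence theorem of Lotay--Wei \cite{Lotay-Wei Shi-estimate} (or the second author \cite{local curvature of G2}) under bounded Ricci curvature to conclude.

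The argument itself is short and does not present a serious obstacle; all the analytic work has already been done in Theorem \ref{theorem1.1}. The only point requiring mild care is the choice of the shift constant $c$, which must be picked uniformly in $t\in[0,T)$, but this is immediate from the standing hypothesis that $R(g(t))$ is uniformly bounded.
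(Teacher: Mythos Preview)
Your proposal is correct and matches the paper's intended argument: the paper states Corollary~\ref{corollary1.4} without a separate proof, since it follows immediately from Theorem~\ref{theorem1.2} (equivalently Theorem~\ref{theorem4.4}), exactly as in your first paragraph. Your alternative route via Theorem~\ref{theorem1.1} and the Lotay--Wei extension criterion is also valid and is in fact the content of the proof of Theorem~\ref{theorem4.4} itself, so the two arguments are essentially the same.
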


Besides this, we study the blow-up rate of the $C^{1}$ norm for the Weyl tensor.

\begin{theorem}{\bf (Blow-up rate of the Weyl tensor)}\label{theorem1.5}
     Let $(M,\varphi(t))_{t\in[0,T)}$ be the solution of the Laplacian $G_{2}$ flow $\eqref{The closed Laplacian flow}$ on a closed $7$-dimensional manifold $M$ with $T<+\infty$, where $g(t)$ is the Riemannian metric associated with $\varphi(t)$. Then we have 

    $(1)$ either $\displaystyle{\liminf_{t\rightarrow T}\left((\min_{M} R(g(t))\right)=-\infty}$,

    $(2)$ or $\displaystyle{\liminf_{t\rightarrow T}\min_{M} R(g(t))>-\infty}$, but for any positive constants $C>0$ and $\delta>0$,
    $$\limsup_{t\rightarrow T}\left(\max_{M}|W(g(t))|_{C^{1}(M,g(t))}\right)>\frac{C}{(T-t)^{1-\delta}}.$$
\end{theorem}

We give an outline of this paper. We review the basic theory in Section \ref{section2} about $G_{2}$-structure, $G_{2}$-decompositions of $2$-forms and $3$-forms, the torsion tensors of $G_{2}$-structures, and curvature tensor of closed $G_{2}$-structures. Section 3 gives some evolution equations under the Laplacian $G_{2}$ flow $\eqref{The closed Laplacian flow}$. In Section 4, using curvature pinching estimates and the maximum principle, we prove the main results.

\section{$G_{2}$-structure}\label{section2}
In this section, we give some basis theory of $G_{2}$-structures in \cite{Bryant 2006,Spiros 2003,Spiros 2005,Spiros flow of G2,Spiros notes of G2 on 2010,Spiros introduce to G2,local curvature of G2,Lotay-Wei Shi-estimate}.

\subsection{$G_{2}$-structure on smooth $7$-manifold}\label{subsection2.1}

Let $\{e_{1},e_{2},\cdots,e_{7}\}$ denote the standard basis of $\mathbb{R}^{7}$ and let $\{e^{1},e^{2},\cdots,e^{7}\}$ be its dual basis. Define the 3-form
$$\phi:=e^{123}+e^{145}+e^{167}+e^{246}-e^{257}-e^{347}-e^{356},$$
where $e^{ijk}:=e^{i}\wedge e^{j}\wedge e^{k}$. The subgroup of ${\rm GL}(7,\mathbb{R})$ {that fixes $\phi$} is the exceptional Lie group $G_{2}$, a compact, connected, simple $14$-dimensional Lie subgroup of ${\rm SO}(7)$. In fact, $G_{2}$ acts irreducibly on $\mathbb{R}^{7}$ and preserves the metric and orientation for which $\{e_{1},e_{2},\cdots,e_{7}\}$ is an oriented orthonormal basis. Note that $G_{2}$ also preserves the $4$-form
$$\ast_{\phi}\phi=e^{4567}+e^{2367}+e^{2345}+e^{1357}-e^{1346}-e^{1256}-e^{1247},$$
where $\ast_{\phi}$ is the Hodge star operator determined by the metric and orientation.

For a smooth $7$-manifold $M$ and a point $x\in M$, we define as in \cite{local curvature of G2,Lotay-Wei Shi-estimate}
\begin{align}
    \wedge_{+}^{3}(T_{x}^{\ast}M):=\left\{\varphi_{x}\in\wedge^{3}(T_{x}^{\ast}M)\Big{|} u^{\ast}\phi=\varphi_{x},
    \text{for\ invertible}\ u\in\text{Hom}_{\mathbb{R}}(T_{x}^{\ast}M,\mathbb{R}^{7})\right\}\notag
\end{align}
and the bundle
\begin{align}
    \wedge_{+}^{3}(T^{\ast}M):=\bigcup_{x\in M}\wedge_{+}^{3}(T_{x}^{\ast}M)\notag.
\end{align}
We call a section $\varphi$ of $\wedge_{+}^{3}(T^{\ast}M)$ a {\it positive $3$-form} on $M$ or a {\it $G_{2}$-structure} on $M$, and denote the space of positive 3-form by $\Omega^{3}_{+}(M)$. The existence of $G_{2}$-structure is equivalent to the
property that $M$ is {both orientable and spinnable}, which is equivalent to the vanishing of the first and two Stiefel-Whitney classes $\omega_{1}(TM)$ and $\omega_{2}(TM)$. For more details, see Theorem 10.6 in \cite{Lawson-Michelsohn}.

For a {positive} $3$-form $\varphi$, we define a $\Omega^{7}(M)$-valued bilinear form $\text{B}_{\varphi}$ by
$$\text{B}_{\varphi}(u,v)=\frac{1}{6}(u\lrcorner\varphi)\wedge(v\lrcorner\varphi)\wedge\varphi,$$
where $u, v$ are tangent vectors on $M$ and $``\lrcorner"$ is the interior multiplication operator (\textit{Here we use the orientation in \cite{Bryant 2006}}). Then we can see that any $\varphi\in\Omega^{3}_{+}(M)$ determines a Riemannian metric $g_{\varphi}$ and an orientation $d V_{\varphi}$, hence the Hodge star operator $\ast_{\varphi}$ and the associated $4$-form
$$\psi:=\ast_{\varphi}\varphi$$
can also be uniquely determined by $\varphi$. 
{
\begin{remark}
    Throughout this paper, we consistently adopt the orientation induced by $G_{2}$-structure from \cite{Bryant 2006}.
\end{remark}}

If we choose local coordinates $\{x^{1},\cdots,x^{7}\}$ on $M$, then we can write $\varphi,\ \psi$ locally as
$$\varphi=\frac{1}{6}\varphi_{ijk}dx^{i}\wedge dx^{j}\wedge dx^{k},\ \psi=\frac{1}{24}\psi_{ijkl}dx^{i}\wedge dx^{j}\wedge dx^{k}\wedge dx^{l}.$$
The relation of $\varphi,\ \psi,\ g$ satisfy the following:
\begin{lemma}[\cite{Bryant 2006,Spiros introduce to G2}]\label{lemma2.1}
    In local coordinates on M, the tensors $\varphi,\ \psi$ and $g$ satisfy the following relations:
    \begin{align}
        \varphi_{ijk}\varphi_{abc}g^{kc}&=g_{ia}g_{jb}-g_{ib}g_{ja}+\psi_{ijab},\\
        \varphi_{ijk}\varphi_{abc}g^{jb}g^{kc}&=6g_{ia},\\
        \psi_{ijkl}\psi_{abcd}g^{jb}g^{kc}g^{ld}&=24g_{ia},\\
        \varphi_{ijq}\psi_{abkl}g^{ia}g^{jb}&=4\varphi_{qkl}.
    \end{align}
\end{lemma}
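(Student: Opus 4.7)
The plan is to reduce all four identities, each of which is tensorial, to algebraic verifications at a single point. By definition of a $G_{2}$-structure, at every $x \in M$ there exists an invertible $u \in \mathrm{Hom}_{\mathbb{R}}(T_{x}^{\ast}M, \mathbb{R}^{7})$ with $u^{\ast}\phi = \varphi_{x}$; by the construction of $g_{\varphi}$ and $\psi = \ast_{\varphi}\varphi$, this map is an isometry from $(T_{x}^{\ast}M, g_{x})$ onto $\mathbb{R}^{7}$ with its standard Euclidean inner product, and it pulls $\ast_{\phi}\phi$ back to $\psi_{x}$. Hence it suffices to verify each identity in the standard orthonormal basis of $\mathbb{R}^{7}$, where the components of $\varphi$ and $\psi$ are given by the seven nontrivial monomials of $\phi$ and $\ast_{\phi}\phi$ recalled at the start of this section and $g^{ij} = \delta^{ij}$.

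First I would establish the master identity, namely the analogue of the first relation. Written out, the claim is that $\sum_{k} \phi_{ijk}\phi_{abk} = \delta_{ia}\delta_{jb} - \delta_{ib}\delta_{ja} + (\ast_{\phi}\phi)_{ijab}$ for every ordered quadruple $(i,j,a,b)$. Because $\phi$ has only seven nonzero triples (up to antisymmetry), most terms in the sum vanish; by the antisymmetries of both sides it is enough to split into cases according to whether the unordered pairs $\{i,j\}$ and $\{a,b\}$ are equal, share exactly one element, or are disjoint, and in each sub-case to check the result against the explicit list of components of $\ast_{\phi}\phi$. This is a finite, if slightly tedious, enumeration.

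With the first identity in hand, the remaining three follow by successive contractions. Contracting the first identity with $g^{jb}$ annihilates the $\psi_{ijab}$ term (which is antisymmetric in $j,b$) and leaves $7g_{ia} - g_{ia} = 6g_{ia}$, giving the second identity. For the fourth identity, substitute the expression for $\psi_{abkl}$ supplied by the first identity into the left-hand side: after a cyclic relabeling of indices, the triple-$\varphi$ term reduces via the second identity to $6\varphi_{qkl}$, and the two $g$-terms collapse under $\varphi$'s antisymmetry to $-2\varphi_{qkl}$, summing to $4\varphi_{qkl}$. The third identity is obtained in the same spirit by substituting the first identity in both factors of $\psi$ and repeatedly reducing the arising $\varphi\varphi$- and $\varphi\varphi\varphi$-contractions with the first, second, and fourth identities; alternatively one may invoke the Hodge duality $\ast\psi = \varphi$ to convert the $\psi\psi$-contraction directly into the $\varphi\varphi$-contraction already handled in the second identity.

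The only real obstacle is bookkeeping in the verification of the master identity, which is however entirely routine in a sparse orthonormal basis. These relations appear in equivalent form throughout the $G_{2}$ literature (compare Bryant \cite{Bryant 2006} and Karigiannis \cite{Spiros introduce to G2}); the lemma is recorded here only to fix sign and contraction conventions for the evolution-equation computations carried out in the sections that follow.
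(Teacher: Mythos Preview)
Your approach is correct and standard: reduce to the model case on $\mathbb{R}^{7}$ via the defining isomorphism, verify the master identity by direct inspection of the seven nonzero monomials of $\phi$, and obtain the remaining three by successive contractions. Your derivations check out (in particular the contraction yielding $6g_{ia}$ from the first identity, and the $6-1-1=4$ count for the fourth).

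However, the paper does not actually prove Lemma~\ref{lemma2.1}: it is stated without proof and attributed to \cite{Bryant 2006, Spiros introduce to G2}, serving only to fix conventions for the later evolution computations. So there is nothing to compare against; your sketch simply supplies the routine verification that the paper delegates to the cited references, and it is consistent with the arguments found there.
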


\subsection{$G_{2}$-decomposition of $\Omega^{2}(M)$ and $\Omega^{3}(M)$}\label{subsection2.2} 

The group $G_{2}$ acts irreducibly on $\mathbb{R}^{7}$ (and hence on $\wedge^{1}(\mathbb{R}^{7})^{\ast}$ and $\wedge^{6}(\mathbb{R}^{7})^{\ast}$), but it
acts reducibly on $\wedge^{k}(\mathbb{R}^{7})^{\ast}$ for $2\leq k\leq 5$. Hence a $G_{2}$ structure $\varphi$ induces splittings
of the bundles $\wedge^{k}(T^{\ast}M)(2\leq k\leq5)$ into direct summands, which we denote by
$\wedge^{k}_{l}(T^{\ast}M,\varphi)$ with $l$ being the rank of the bundle. We let the space of {smooth} sections
of $\wedge^{k}_{l}(T^{\ast}M,\varphi)$ be $\Omega^{k}_{l}(M)$. Define the natural projections
$$\pi^{k}_{l}:\Omega^{k}(M)\longrightarrow \Omega^{k}_{l}(M),\ \ \alpha\longmapsto \pi^{k}_{l}(\alpha).$$
Then we have
\begin{align}
    \Omega^{2}(M)&=\Omega^{2}_{7}(M)\oplus\Omega^{2}_{14}(M),\notag\\
    \Omega^{3}(M)&=\Omega^{3}_{1}(M)\oplus\Omega^{3}_{7}(M)\oplus\Omega^{3}_{27}(M)\notag.
\end{align}
where each component is determined by
\begin{align}
    \Omega^{2}_{7}(M)&=\{X\lrcorner\varphi:X\in {\Gamma}(TM)\}=\{\beta\in\Omega^{2}(M):\ast_{\varphi}(\varphi\wedge\beta)=2\beta\},\notag\\
    \Omega^{2}_{14}(M)&=\{\beta\in\Omega^{2}(M):\psi\wedge\beta=0\}=\{\beta\in\Omega^{2}(M):\ast_{\varphi}(\varphi\wedge\beta)=-\beta\},\notag
\end{align}
and
\begin{align}
    \Omega^{3}_{1}(M)&=\{f\varphi:f\in C^{\infty}(M)\},\notag\\
    \Omega^{3}_{7}(M)&=\{\ast_{\varphi}(\varphi\wedge\alpha):\alpha\in\Omega^{1}(M)\}=\{X\lrcorner\psi:X\in {\Gamma}(TM)\},\notag\\
    \Omega^{3}_{27}(M)&=\{\eta\in\Omega^{3}(M):\eta\wedge\varphi=\eta\wedge\psi=0\}.\notag
\end{align}

\begin{remark}\label{remark2.2}
    $\Omega^{4}$ and $\Omega^{5}$ have the corresponding decompositions by Hodge duality. The more details for $G_{2}$-decomposition see \cite{Bryant 2006,Spiros introduce to G2}.
\end{remark}

\subsection{The torsion tensors of $G_{2}$-structure}
\label{subsection2.3}

By the definition  of $G_{2}$ decomposition, we can find unique differential forms
$\tau_{0}\in {C^{\infty}}(M),\tau_{1},\widetilde{\tau}_{1}\in\Omega^{1}(M),\tau_{2}\in\Omega^{2}_{14}(M)$ and $\tau_{3}\in\Omega^{3}_{27}(M)$ such that (see \cite{Bryant 2006})
\begin{align}
    d\varphi&=\tau_{0}\psi+3\!\ \tau_{1}\wedge\varphi+\ast_{\varphi}\tau_{3},\\
    d\psi&=4\!\ \widetilde{\tau}_{1}\wedge\psi+\tau_{2}\wedge\varphi.
\end{align}
In fact, {Bryant \cite{Bryant 2006}} proved that $\tau_{1}=\widetilde{\tau}_{1}$. We call $\tau_{0}$ the \textit{scalar torsion}, $\tau_{1}$ the \textit{vector torsion}, $\tau_{2}$ the \textit{Lie algebra torsion}, and $\tau_{3}$ the \textit{symmetric traceless torsion}. We also call $\tau_{\varphi}:=\{\tau_{0},\tau_{1},\tau_{2},\tau_{3}\}$ the intrinsic torsion forms of the $G_{2}$-structure $\varphi$.
From \cite{Spiros flow of G2,Spiros introduce to G2}, the full torsion tensor $\mathbf{T}=\mathbf{T}_{ij}dx^{i}\otimes dx^{j}$ satisfies the followings:
\begin{align}\label{2.7}
    \nabla_{i}\varphi_{jkl}&=\mathbf{T}_{i}^{\ m}\psi_{mjkl},\\
    \mathbf{T}_{i}^{\ j}&=\frac{1}{24}\nabla_{i}\varphi_{lmn}\psi^{jlmn},
\end{align}
and 
\begin{align}
    \nabla_{m}\psi_{ijkl}=-(\mathbf{T}_{mi}\varphi_{jkl}-\mathbf{T}_{mj}\varphi_{ikl}-\mathbf{T}_{mk}\varphi_{jil}-\mathbf{T}_{ml}\varphi_{jki}).
\end{align}
The full torsion tensor $\mathbf{T}_{ij}$ is related to the
intrinsic torsion forms by the following:
\begin{align}
\label{Def of T}\mathbf{T}_{ij}=\frac{\tau_{0}}{4}g_{ij}-(\tau_{3})_{ij}-(\tau_{1}^{\#}\lrcorner\varphi)_{ij}-\frac{1}{2}(\tau_{2})_{ij}
\end{align}
or as $2$-tensors,
$$\mathbf{T}=\frac{\tau_{0}}{4}g_{\varphi}-\tau_{3}-\tau_{1}^{\#}\lrcorner\varphi-\frac{1}{2}\tau_{2},$$
where $(\tau_{1}^{\#}\lrcorner\varphi)_{ij}=(\tau_{1}^{\#})^{l}\varphi_{lij}$ and $\#$ is the isomorphism from $1$-form to vector fields.

${}$

If $\varphi$ is closed, which means $d\varphi=0$, then $\tau_{0},\tau_{1},\tau_{3}$ are all zero from (2.5), so the only nonzero torsion form is 
$$\tau\equiv\tau_{2}=\frac{1}{2}(\tau_{2})_{ij}dx^{i}{\wedge} dx^{j}=\frac{1}{2}\tau_{ij}dx^{i}{\wedge} dx^{j}.$$
Then according to $\eqref{Def of T}$, we have 
$$
\mathbf{T}_{ij}=-\mathbf{T}_{ji}=-\frac{1}{2}(\tau_{2})_{ij}\ \ \text{or equivalently}\ \ \mathbf{T}=-\frac{1}{2}\tau,
$$
so that $\mathbf{T}$ is a skew-symmetric 2-tensor, i.e., a $2$-form. Since $d\psi=\tau\wedge\varphi=-\ast_{\varphi}\tau$, we get $d_{\varphi}^{\ast}\tau=\ast_{\varphi}d\ast_{\varphi}\tau=-\ast_{\varphi}d^{2}\psi=0$ which is given in local coordinates by
\begin{align}
\nabla^{i}\tau_{ij}=0.
\end{align}

\subsection{Curvature tensor and torsion}\label{subsection2.4}
Since $\varphi$ determines the unique metric $g\equiv g_{\varphi}$ on $M$, we obtain the Levi-Civita connection $\nabla\equiv\nabla_{g}$ induced by $g$, and also the Hodge Laplacian
$$\Delta_{\varphi}=dd^{\ast}_{\varphi}+d^{\ast}_{\varphi}d$$
of $g$. 
The Riemann curvature $(3,1)$-tensor field ${\rm Rm}$ of $g$ is defined by
$$
{\rm Rm}(X,Y)Z:=\nabla_{X}\nabla_{Y}Z-\nabla_{Y}\nabla_{X}Z-\nabla_{[X,Y]}Z,
$$
where $X,Y,Z$ are vector fields on $M$. In local coordinates $\{x^{1},\cdots,x^{7}\}$ on $M$, if we choose $\displaystyle{X=\frac{\partial}{\partial x^{i}}, Y=\frac{\partial}{\partial x^{j}}, Z=\frac{\partial}{\partial x^{k}}}$, $1\leq i,j,k\leq 7$, then we can write
$$
{\rm Rm}\left(\frac{\partial}{\partial x^{i}},\frac{\partial}{\partial x^{j}}\right)\frac{\partial}{\partial x^{k}}=R_{ijk}^{\ \ \ l}\frac{\partial}{\partial x^{l}},\ \ R_{ijkl}=g_{{lm}}R_{ijk}^{\ \ \ m}.
$$
We have the Ricci identities when we commute covariant derivatives
of a $(0,k)$-tensor $\alpha$:
\begin{align}\label{2.12}
    (\nabla_{i}\nabla_{j}-\nabla_{j}\nabla_{i})\alpha_{i_{1}i_{2}\cdots i_{k}}=-\sum_{l=1}^{k}R_{iji_{l}}^{\ \ \ m}\alpha_{i_{1}\cdots i_{l-1}m i_{l+1}\cdots  i_{k}}.
\end{align}
For the torsion form, We have the following Bianchi-type identity.

\begin{lemma}\label{lemma2.3}
The full torsion tensor $\mathbf{T}$ satisfies the following identity: 
$$\nabla_{i}\mathbf{T}_{jk}-\nabla_{j}\mathbf{T}_{ik}=-\left(\frac{1}{2}R_{ijmn}+\mathbf{T}_{im}\mathbf{T}_{jn}\right)\varphi_{k}^{\ mn}.$$
\end{lemma}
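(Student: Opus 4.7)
The plan is to apply the Ricci identity to the $3$-form $\varphi$ and use the structure equations (2.8) and (2.10) to relate the resulting commutator to covariant derivatives of the full torsion tensor $\mathbf{T}$. Because the asserted identity has a free index $k$ attached to $\varphi_k^{\ mn}$ on the right-hand side, the cleanest bookkeeping is to start from the inversion formula (2.9), rewritten with $k$ lowered, $\mathbf{T}_{jk}=\tfrac{1}{24}(\nabla_j\varphi_{lmn})\psi_k^{\ lmn}$.

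Differentiating in $i$ and skew-symmetrizing in $(i,j)$ splits $\nabla_i\mathbf{T}_{jk}-\nabla_j\mathbf{T}_{ik}$ into two blocks: a curvature block $\tfrac{1}{24}\bigl([\nabla_i,\nabla_j]\varphi_{lmn}\bigr)\psi_k^{\ lmn}$, and a torsion-squared block $\tfrac{1}{24}\bigl((\nabla_j\varphi_{lmn})\nabla_i\psi_k^{\ lmn}-(\nabla_i\varphi_{lmn})\nabla_j\psi_k^{\ lmn}\bigr)$. In the curvature block I would substitute the Ricci identity (2.12) for $[\nabla_i,\nabla_j]\varphi_{lmn}$; by the full antisymmetry of $\varphi$ and $\psi$, the three resulting curvature summands become equal after contraction with $\psi_k^{\ lmn}$, and each one reduces through the $G_2$ contraction identity $\varphi^{bc}_{\ \ p}\psi_{bcka}=4\varphi_{pka}$ coming from Lemma \ref{lemma2.1}. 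Tracking constants collapses this block to $-\tfrac{1}{2}R_{ijmn}\varphi_k^{\ mn}$.

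In the torsion-squared block I would use (2.8) to replace each $\nabla\varphi$ by a $\mathbf{T}\cdot\psi$ product and (2.10) to replace each $\nabla\psi$ by a $\mathbf{T}\cdot\varphi$ combination; this produces $\mathbf{T}\cdot\mathbf{T}$ paired with various $\psi\psi$- or $\psi\varphi$-contractions. Applying the algebraic identities of Lemma \ref{lemma2.1} reduces every such contraction to either $\varphi_k^{\ mn}$ with two free $\mathbf{T}$-indices or a metric-only contraction of the $\mathbf{T}$'s, and the skew-symmetrization in $(i,j)$ then eliminates the symmetric remainders. What should survive is exactly $-\mathbf{T}_{im}\mathbf{T}_{jn}\varphi_k^{\ mn}$, giving the stated identity when added to the curvature block.

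The main obstacle is the bookkeeping in the torsion-squared block: formula (2.10) contributes four $\mathbf{T}\cdot\varphi$ summands, so expanding both copies of $\nabla\psi$ produces on the order of sixteen quadratic terms that must be collapsed using Lemma \ref{lemma2.1} and the antisymmetries in $(i,j)$ and in the $\varphi,\psi$ indices. A parallel and sometimes more transparent route is to differentiate (2.8) directly in $j$, skew-symmetrize in $(i,j)$, and contract the resulting equation with $\psi^{qklm}$ via $\psi_{pklm}\psi^{qklm}=24\,\delta_p^q$ to isolate $\nabla_i\mathbf{T}_j^{\ q}-\nabla_j\mathbf{T}_i^{\ q}$; the two routes yield the same identity and differ only in which intermediate contractions one must inspect by hand.
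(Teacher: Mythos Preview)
Your outline is correct and is precisely the standard derivation found in the references the paper cites (Karigiannis's \emph{Flows of $G_2$-structures I} and Lotay--Wei): differentiate the defining relation between $\nabla\varphi$ and $\mathbf{T}$, skew-symmetrize, feed the commutator through the Ricci identity, and reduce the resulting $\varphi\!\cdot\!\psi$ and $\psi\!\cdot\!\psi$ contractions via Lemma~\ref{lemma2.1}. The paper itself gives no proof beyond pointing to those references, so there is nothing to compare; your curvature-block computation already lands on $-\tfrac12 R_{ijmn}\varphi_k^{\ mn}$, and the torsion-squared block, while tedious, collapses exactly as you describe once the $(i,j)$-antisymmetrization kills the symmetric residues.
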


\begin{proof}
    For the details see \cite{Spiros flow of G2} and \cite{Lotay-Wei Shi-estimate}.
\end{proof}

For a closed $G_{2}$-structure $\varphi$, the Ricci curvature is given by (see \cite{Lotay-Wei Shi-estimate})
\begin{align}
    R_{jk}=g^{il}R_{ijkl}=-(\nabla_{i}\mathbf{T}_{jm})\varphi_{k}^{\ im}-\mathbf{T}_{j}^{\ i}\mathbf{T}_{ik}.
\end{align}
Moreover, the factor $\nabla_{i}\mathbf{T}_{jm}$ can be expressed as

\begin{lemma}\label{lemma2.4}
    For any closed $G_{2}$-structure $\varphi$, we have
\begin{align}
    \nabla_{i}\mathbf{T}_{jk}&=-\frac{1}{4}R_{ijmn}\varphi_{k}^{\ mn}-\frac{1}{4}R_{kjmn}\varphi_{i}^{\ mn}+\frac{1}{4}R_{ikmn}\varphi_{j}^{\ mn}\notag\\
    &\quad -\frac{1}{2}\mathbf{T}_{im}\mathbf{T}_{jn}\varphi_{k}^{\ mn}-\frac{1}{2}\mathbf{T}_{km}\mathbf{T}_{jn}\varphi_{i}^{\ mn}+\frac{1}{2}\mathbf{T}_{im}\mathbf{T}_{kn}\varphi_{j}^{\ mn}\notag.
\end{align}
\end{lemma}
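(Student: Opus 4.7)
The plan is to extract the single-directional formula for $\nabla_i \mathbf{T}_{jk}$ by symmetrizing the Bianchi-type identity of Lemma 2.3 over cyclic permutations of the triple $(i,j,k)$, crucially using that $\mathbf{T}$ is skew-symmetric whenever $\varphi$ is closed. Writing
\[
S_{ijk} := \left(\tfrac{1}{2}R_{ijmn} + \mathbf{T}_{im}\mathbf{T}_{jn}\right)\varphi_{k}^{\ mn},
\]
Lemma 2.3 reads $\nabla_i \mathbf{T}_{jk} - \nabla_j \mathbf{T}_{ik} = -S_{ijk}$, and similarly after the cyclic shifts $(i,j,k)\to(j,k,i)$ and $(i,j,k)\to(k,i,j)$.

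The next step is to rewrite the cyclically shifted versions using $\mathbf{T}_{ab}=-\mathbf{T}_{ba}$, so that every derivative term appears in the same ordering $\nabla_{\bullet}\mathbf{T}_{\bullet\bullet}$ with the latter pair in the order $(i,j)$, $(j,k)$, or $(i,k)$. This converts the three identities into a linear system in the three unknowns $\nabla_i \mathbf{T}_{jk}$, $\nabla_j \mathbf{T}_{ik}$, $\nabla_k \mathbf{T}_{ij}$:
\begin{align*}
\nabla_i \mathbf{T}_{jk} - \nabla_j \mathbf{T}_{ik} &= -S_{ijk},\\
-\nabla_j \mathbf{T}_{ik} + \nabla_k \mathbf{T}_{ij} &= -S_{jki},\\
\nabla_k \mathbf{T}_{ij} + \nabla_i \mathbf{T}_{jk} &= -S_{kij}.
\end{align*}

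Taking the combination (first) $-$ (second) $+$ (third) eliminates $\nabla_j \mathbf{T}_{ik}$ and $\nabla_k \mathbf{T}_{ij}$, yielding $2\nabla_i \mathbf{T}_{jk} = -S_{ijk} + S_{jki} - S_{kij}$. Dividing by $2$ gives the stated identity, once the terms are put in the target form via the Riemann antisymmetries $R_{jkmn} = -R_{kjmn}$ and $R_{kimn} = -R_{ikmn}$, together with the observation that in expressions such as $\mathbf{T}_{jm}\mathbf{T}_{kn}\varphi_i^{\ mn}$ one can rename $m\leftrightarrow n$ and use $\varphi_i^{\ nm} = -\varphi_i^{\ mn}$ to convert $\mathbf{T}_{jm}\mathbf{T}_{kn}\varphi_i^{\ mn}$ into $-\mathbf{T}_{km}\mathbf{T}_{jn}\varphi_i^{\ mn}$, and similarly for the $\varphi_j^{\ mn}$ term.

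The only real subtlety is index bookkeeping: one must carefully account for every sign introduced by the skew-symmetry of $\mathbf{T}$, the Riemann antisymmetries, and the antisymmetry of $\varphi_{k}^{\ mn}$ in $(m,n)$. No further geometric input is needed beyond Lemma 2.3 and the closedness assumption.
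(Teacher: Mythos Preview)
Your argument is correct: cyclically permuting the Bianchi-type identity of Lemma~2.3 and using the skew-symmetry of $\mathbf{T}$ (valid precisely because $\varphi$ is closed) yields a $3\times 3$ linear system whose solution gives $\nabla_i\mathbf{T}_{jk}=\tfrac{1}{2}(-S_{ijk}+S_{jki}-S_{kij})$, and your index manipulations to match the displayed form all check out. The paper itself does not supply a proof here but simply cites Lotay--Wei; your derivation is the standard one and is almost certainly what that reference does as well.
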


\begin{proof}
    This lemma has been proved in \cite{Lotay-Wei Shi-estimate}.
\end{proof}

We define the inner product for any two $k$-tensors $A=A_{i_{1}\cdots i_{k}}dx^{i_{1}}\otimes\cdots\otimes dx^{i_{k}},B=B_{i_{1}\cdots i_{k}}dx^{i_{1}}\otimes\cdots\otimes dx^{i_{k}}$ as
$$
\langle A,B\rangle=A_{i_{1}\cdots i_{k}}B^{i_{1}\cdots i_{k}},
$$
then the norm of any $k$-tensor $A=A_{i_{1}\cdots i_{k}}dx^{i_{1}}\otimes\cdots\otimes dx^{i_{k}}$ is
$$
|A|^{2}=\langle A,A\rangle=A_{i_{1}\cdots i_{k}}A^{i_{1}\cdots i_{k}},
$$
and the $C^{1}$ norm of any $k$-tensor $A$ on 7-dimensional manifold denote by
\begin{align}
\label{C^{1} norm}|A|_{C^{1}}:=\sqrt{|A|^{2}+|\nabla A|^{2}}.
\end{align}
The scalar curvature $R$ of a closed $G_{2}$-structure $\varphi$ satisfies
\begin{align}
    R=g^{ij}R_{ij}=-|\mathbf{T}|^{2}=-g^{ik}g^{jl}\mathbf{T}_{ij}\mathbf{T}_{kl}.
\end{align}
The Einstein tensor or traceless Ricci tensor $E$ is defined as
$$
E_{ij}=R_{ij}-\frac{R}{7}g_{ij},
$$
Then the Riemann curvature tensor can be decomposed in the following way
$$
{\rm Rm}=\frac{R}{84}g\circ g+\frac{1}{5}E\circ g+W,
$$
where $W$ is the Weyl tensor and ``$\circ$" is the Kulkarni-Nomizu product defined by 
$$
(\mathbf{\alpha}\circ {\bf\beta})_{ijkl}:={\bf\alpha}_{il}{\bf\beta}_{jk}+{\bf\alpha}_{jk}{\bf\beta}_{il}-{\bf\alpha}_{ik}{\bf\beta}_{jl}-{\bf\alpha}_{jl}{\bf\beta}_{ik}
$$
for any symmetric $2$-tensors ${\bf\alpha}$ and ${\bf\beta}$. In local coordinates, the Weyl tensor is given by 
\begin{align}
    \label{weyl tensor}W_{ijkl}&=R_{ijkl}-\frac{1}{5}(g_{il}R_{jk}+g_{jk}R_{il}-g_{ik}R_{jl}-g_{jl}R_{ik})\\
    &\quad+\frac{1}{30}R(g_{il}g_{jk}-g_{ik}g_{jl}).\notag
\end{align}

For convenience, we often use $g$, $\Delta$, ${\rm Rm}$, $ W$, ${\rm Ric}$, $R$, $\nabla$, $|\cdot|$, $\varphi$, $\mathbf{T}$, $\psi$ to replace $g(t)$, $\Delta_{\varphi(t)}$, ${\rm Rm}(g(t))$, $W(g(t))$, ${\rm Ric}(g(t))$, $R(g(t))$, $\nabla_{g(t)}$, $|\cdot|_{g(t)}$, $\varphi(t)$, $\mathbf{T}_{\varphi(t)}$, $\psi_{\varphi(t)}$. We always omit
the time variable $t$.

\section{Evolution equations under the Laplacian $G_{2}$ flow}\label{section3}

In this section, we will calculate some evolution equations under the Laplacian $G_{2}$ flow $\eqref{The closed Laplacian flow}$.
From \cite{Lotay-Wei Shi-estimate}, we see that the associated metric tensor $g(t)$ evolves by
\begin{align}
\partial_{t}g_{ij}=2h_{ij},
\end{align}
where
$$
h_{ij}=-R_{ij}-\frac{1}{3}|\mathbf{T}|^{2}g_{ij}-2\widehat{\mathbf{T}}_{ij}
$$
and $\widehat{\mathbf{T}}_{ij}=\mathbf{T}_{i}^{\ k}\mathbf{T}_{kj}$.
We denote by
$$
S_{ij}:=R_{ij}+\frac{1}{3}|\mathbf{T}|^{2}g_{ij}+2\widehat{\mathbf{T}}_{ij}=-h_{ij},
$$
the components of the symmetric $2$-tensor ${\rm Sic}(g(t))$. Then the equation (3.1) can be written as 
$$
\partial_{t}g_{ij}=-2S_{ij},
$$
and the trace of ${\rm Sic}(g(t))$ is
$$
S(g(t))={\rm tr}_{g(t)}({\rm Sic}(g(t))=R(g(t))+\frac{7}{3}|\mathbf{T}(t)|^{2}_{g(t)}-2|\mathbf{T}(t)|^{2}_{g(t)}=\frac{2}{3}R(g(t)).
$$

For a geometric flow 
\begin{align}
\partial_{t}g_{ij}=\eta_{ij},
\label{geometric flow}
\end{align}
where $\eta_{ij}$ is a family of symmetric $2$-tensor. From formula (2.30) and (2.31) in \cite{C-L-N Ricci flow}, we have the following evolution equations for curvature:

\begin{lemma}\label{lemma3.1}
    Under the flow $\eqref{geometric flow}$, we have
\begin{align}
    \partial_{t}R_{ij}&=-\frac{1}{2}\left(\Delta_{L}\eta_{ij}+\nabla_{i}\nabla_{j}{\rm tr}_{g(t)}(\eta)-\nabla_{i}({\rm div}_{g(t)}\eta)_{j}-\nabla_{j}({\rm div}_{g(t)}\eta)_{i}\right),\\
    \partial_{t}R(g(t))&=-\Delta_{g(t)}{\rm tr}_{g(t)}(\eta(t))+{\rm div}_{g(t)}\left({\rm div}_{g(t)}\eta(t)\right)-\left\langle{\rm Ric}(g(t)),\eta(t)\right\rangle_{g(t)}.
\end{align}
where $\Delta_{L}$ denotes the Lichnerowicz Laplacian
$$\Delta_{L}\eta_{ij}=\Delta\eta_{ij}-R_{i}^{\ k}\eta_{kj}-R_{j}^{\ k}\eta_{ki}+2R_{pijl}\eta^{pl} $$
and ${\rm tr}_{g(t)}(\eta(t))=g^{ij}\eta_{ij}$, $({\rm div}_{g(t)}\eta)_{j}=\nabla^{m}\eta_{mj}$.
\end{lemma}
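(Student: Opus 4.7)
The plan is to derive both evolution equations from the standard first-variation formula for Christoffel symbols and then contract, which is the approach taken in Chow-Lu-Ni.

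First I would differentiate the Levi-Civita Christoffel symbols. Starting from $\Gamma^k_{ij} = \frac{1}{2} g^{kl}(\partial_i g_{jl} + \partial_j g_{il} - \partial_l g_{ij})$, and using $\partial_t g^{ij} = -g^{ik}g^{jl}\eta_{kl}$, one finds the classical identity
\begin{equation}
\partial_t \Gamma^k_{ij} = \frac{1}{2} g^{kl}\bigl(\nabla_i \eta_{jl} + \nabla_j \eta_{il} - \nabla_l \eta_{ij}\bigr),\notag
\end{equation}
which is a bona fide tensor. Then, from the coordinate expression $R_{ijk}{}^{l} = \partial_i \Gamma^l_{jk} - \partial_j \Gamma^l_{ik} + \Gamma^l_{ip}\Gamma^p_{jk} - \Gamma^l_{jp}\Gamma^p_{ik}$, differentiating in $t$ and regrouping yields
\begin{equation}
\partial_t R_{ijk}{}^{l} = \nabla_i (\partial_t \Gamma^l_{jk}) - \nabla_j (\partial_t \Gamma^l_{ik}).\notag
\end{equation}

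Next I would contract $l=i$ to get $\partial_t R_{jk}$. Substituting the formula for $\partial_t \Gamma$ produces three covariant Hessians of $\eta$. Two of them combine, after swapping covariant derivatives via the Ricci identity $\eqref{2.12}$, into $-\tfrac{1}{2}\triangle \eta_{jk}$ together with curvature correction terms of the form $R_i{}^k \eta_{kj} + R_j{}^k \eta_{ki} - 2R_{pijl}\eta^{pl}$, which are precisely what assembles $\triangle_L \eta_{jk}$. The remaining pieces manifestly form $\nabla_j \nabla_k \operatorname{tr}_g(\eta)$ and the two divergence terms $\nabla_j (\operatorname{div}\eta)_k + \nabla_k (\operatorname{div}\eta)_j$, with the correct signs. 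This gives the stated evolution of $R_{ij}$.

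For the scalar curvature I would use $R = g^{jk} R_{jk}$, so
\begin{equation}
\partial_t R = -g^{ja}g^{kb}\eta_{ab} R_{jk} + g^{jk}\partial_t R_{jk} = -\langle \mathrm{Ric},\eta\rangle + g^{jk}\partial_t R_{jk}.\notag
\end{equation}
Tracing the Ricci evolution gives $-\tfrac12 g^{jk}\triangle_L \eta_{jk} - \tfrac12 \triangle \operatorname{tr}_g(\eta) + \operatorname{div}(\operatorname{div}\eta)$, and after expanding $g^{jk}\triangle_L \eta_{jk}$ the curvature cross-terms cancel out, leaving $g^{jk}\triangle_L \eta_{jk} = \triangle \operatorname{tr}_g(\eta) - 2\langle \mathrm{Ric},\eta\rangle$ after an application of the contracted second Bianchi identity to merge the cross terms. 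Combining everything yields the claimed formula for $\partial_t R$.

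The only delicate step is the bookkeeping that converts the raw contracted Hessians of $\eta$ into the Lichnerowicz form; this requires careful use of the Ricci commutation identity to permute covariant derivatives and the contracted Bianchi identity $\nabla^i R_{ij} = \tfrac12 \nabla_j R$ to absorb certain curvature terms. Since this lemma is purely Riemannian and makes no reference to the $G_2$ structure or to the Laplacian flow, none of the torsion formulas in Section \ref{section2} intervene, and the whole argument reduces to standard variational calculus on the space of metrics.
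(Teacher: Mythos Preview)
Your approach is exactly the standard derivation in Chow--Lu--Ni, and in fact the paper does not prove Lemma~\ref{lemma3.1} at all: it simply cites formulas (2.30) and (2.31) of that reference. So your sketch is already more than what the paper supplies, and the strategy (variation of $\Gamma$, then of $R_{ijk}{}^{l}$, then contraction) is the right one.

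There is, however, a computational slip in your scalar-curvature step. When you trace the Lichnerowicz Laplacian,
\[
g^{jk}\triangle_{L}\eta_{jk}
=\triangle\operatorname{tr}_{g}\eta - 2R^{kl}\eta_{kl} + 2g^{jk}R_{pjkl}\eta^{pl},
\]
and in the paper's convention $g^{jk}R_{pjkl}=R_{pl}$, so the two curvature terms cancel \emph{completely}:
\[
g^{jk}\triangle_{L}\eta_{jk}=\triangle\operatorname{tr}_{g}\eta,
\]
not $\triangle\operatorname{tr}_{g}\eta-2\langle\mathrm{Ric},\eta\rangle$. No Bianchi identity is needed here. With your stated formula the final answer would come out as $-\triangle\operatorname{tr}_{g}\eta+\operatorname{div}(\operatorname{div}\eta)$, missing the $-\langle\mathrm{Ric},\eta\rangle$ term; with the corrected trace the $-\langle\mathrm{Ric},\eta\rangle$ comes solely from $\partial_{t}g^{jk}\cdot R_{jk}$ and the claimed evolution of $R$ follows. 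The contracted second Bianchi identity plays no role in this lemma.
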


Next, we calculate the evolution equations of Ricci curvature and scalar curvature under the Laplacian $G_{2}$ flow $\eqref{The closed Laplacian flow}$.

\begin{lemma}\label{lemma3.2}
    Under the Laplacian $G_{2}$ flow $\eqref{The closed Laplacian flow}$, we have
    \begin{align}
        \partial_{t}R_{ij}&=\Delta S_{ij}-2R_{i}^{\ p}R_{pj}-2R_{i}^{\ p}\widehat{\mathbf{T}}_{pj}-2R_{j}^{\ p}\widehat{\mathbf{T}}_{pi}+2R_{pijl}R^{pl}+4R_{pijl}\widehat{\mathbf{T}}^{pl}\notag\\
        &\quad-\frac{1}{3}\nabla_{i}\nabla_{j}|\mathbf{T}|^{2}-2\nabla_{i}\nabla^{p}\widehat{\mathbf{T}}_{pj}-2\nabla_{j}\nabla^{p}\widehat{\mathbf{T}}_{pi}\notag,
    \end{align}
    where $\widehat{\mathbf{T}}_{pj}=\mathbf{T}_{p}^{\ m}\mathbf{T}_{mj}$.
\end{lemma}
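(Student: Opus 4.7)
The plan is to apply Lemma \ref{lemma3.1} directly to the evolving metric $\partial_{t}g_{ij}=2h_{ij}=-2S_{ij}$, so the tensor $\eta_{ij}$ in the general flow formula is $\eta_{ij}=-2S_{ij}$. Substituting this into the expression for $\partial_{t}R_{ij}$ from Lemma \ref{lemma3.1} and absorbing the prefactor $-\tfrac{1}{2}$ gives
\begin{align*}
\partial_{t}R_{ij}=\triangle_{L}S_{ij}+\nabla_{i}\nabla_{j}{\rm tr}_{g}(S)-\nabla_{i}({\rm div}_{g}S)_{j}-\nabla_{j}({\rm div}_{g}S)_{i}.
\end{align*}
So the whole task reduces to unpacking each of these three pieces using $S_{ij}=R_{ij}+\tfrac{1}{3}|\mathbf{T}|^{2}g_{ij}+2\widehat{\mathbf{T}}_{ij}$.

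For the Lichnerowicz term, I would plug the decomposition of $S$ into $\triangle_{L}S_{ij}=\triangle S_{ij}-R_{i}^{\ k}S_{kj}-R_{j}^{\ k}S_{ki}+2R_{pijl}S^{pl}$ and expand term by term. The point to notice is that the pure-trace piece $\tfrac{1}{3}|\mathbf{T}|^{2}g_{ij}$ contributes $-\tfrac{2}{3}|\mathbf{T}|^{2}R_{ij}$ through the two $R_{i}^{\ k}g_{kj}$-type contractions and $+\tfrac{2}{3}|\mathbf{T}|^{2}R_{ij}$ through $2R_{pijl}g^{pl}=2R_{ij}$, which cancel. What survives are exactly the terms $-2R_{i}^{\ p}R_{pj}-2R_{i}^{\ p}\widehat{\mathbf{T}}_{pj}-2R_{j}^{\ p}\widehat{\mathbf{T}}_{pi}+2R_{pijl}R^{pl}+4R_{pijl}\widehat{\mathbf{T}}^{pl}$ together with $\triangle S_{ij}$.

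For the trace and divergence terms I would use the closed $G_{2}$ identity $R=-|\mathbf{T}|^{2}$ and the scalar computation ${\rm tr}_{g}(S)=\tfrac{2}{3}R=-\tfrac{2}{3}|\mathbf{T}|^{2}$ already recorded in the excerpt, so that $\nabla_{i}\nabla_{j}{\rm tr}_{g}(S)=-\tfrac{2}{3}\nabla_{i}\nabla_{j}|\mathbf{T}|^{2}$. For the divergence, the contracted second Bianchi identity $\nabla^{m}R_{mj}=\tfrac{1}{2}\nabla_{j}R=-\tfrac{1}{2}\nabla_{j}|\mathbf{T}|^{2}$ gives
\begin{align*}
({\rm div}_{g}S)_{j}=-\tfrac{1}{2}\nabla_{j}|\mathbf{T}|^{2}+\tfrac{1}{3}\nabla_{j}|\mathbf{T}|^{2}+2\nabla^{p}\widehat{\mathbf{T}}_{pj}=-\tfrac{1}{6}\nabla_{j}|\mathbf{T}|^{2}+2\nabla^{p}\widehat{\mathbf{T}}_{pj}.
\end{align*}
Differentiating and subtracting, the $\nabla_{i}\nabla_{j}|\mathbf{T}|^{2}$ contributions combine to $-\tfrac{2}{3}+\tfrac{1}{6}+\tfrac{1}{6}=-\tfrac{1}{3}$, giving the stated coefficient, and the remaining derivative-of-$\widehat{\mathbf{T}}$ pieces produce the final two terms.

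No step looks genuinely difficult; the only care needed is bookkeeping of signs and index symmetries (in particular $\widehat{\mathbf{T}}_{ij}$ is symmetric, $\mathbf{T}_{ij}$ is skew, and $R_{pijl}g^{pl}=R_{ij}$ with the conventions of the excerpt), together with confirming that the non-cancelling pieces assemble into exactly the expression in the lemma. The main potential pitfall is making sure that the ostensibly $|\mathbf{T}|^{2}$-weighted Ricci terms in $\triangle_{L}S_{ij}$ really do cancel rather than leaving a stray multiple of $|\mathbf{T}|^{2}R_{ij}$, since otherwise the statement would not match; I would double-check this cancellation explicitly before writing up.
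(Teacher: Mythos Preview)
Your proposal is correct and follows essentially the same route as the paper: apply Lemma~\ref{lemma3.1} with $\eta=-2S$, expand $\triangle_{L}S_{ij}$ using $S_{ij}=R_{ij}+\tfrac{1}{3}|\mathbf{T}|^{2}g_{ij}+2\widehat{\mathbf{T}}_{ij}$ (observing the same cancellation of the $|\mathbf{T}|^{2}R_{ij}$ pieces), and combine the trace and divergence terms via ${\rm tr}_{g}S=\tfrac{2}{3}R$ and the contracted Bianchi identity $\nabla^{p}R_{pj}=\tfrac{1}{2}\nabla_{j}R$. The only cosmetic difference is that the paper keeps $R$ and $|\mathbf{T}|^{2}$ separate until after applying Bianchi, whereas you convert $R=-|\mathbf{T}|^{2}$ at the outset; the resulting coefficient $-\tfrac{1}{3}$ in front of $\nabla_{i}\nabla_{j}|\mathbf{T}|^{2}$ is obtained identically.
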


\begin{proof}
    From the definition of ${\rm Sic}(g(t))$ and Lemma \ref{lemma3.1}, we have
    \begin{align}
        \partial_{t}R_{ij}&=-\frac{1}{2}\left(-\Delta_{L}2S_{ij}-2\nabla_{i}\nabla_{j}S+2\nabla_{i}({\rm div}_{g(t)} {\rm Sic})_{j}+2\nabla_{j}({\rm div}_{g(t)}{\rm Sic})_{i}\right)\notag\\
        &=\Delta_{L}S_{ij}+\frac{2}{3}\nabla_{i}\nabla_{j}R-\nabla_{i}\nabla^{p}R_{pj}-\nabla_{j}\nabla^{p}R_{pi}-\frac{2}{3}\nabla_{i}\nabla_{j}|\mathbf{T}|^{2}\notag\\
        &\quad-2\nabla_{i}\nabla^{p}\widehat{\mathbf{T}}_{pj}-2\nabla_{j}\nabla^{p}\widehat{\mathbf{T}}_{pi}\notag\\
        &=\Delta_{L}S_{ij}-\frac{1}{3}\nabla_{i}\nabla_{j}|\mathbf{T}|^{2}-2\nabla_{i}\nabla^{p}\widehat{\mathbf{T}}_{pj}-2\nabla_{j}\nabla^{p}\widehat{\mathbf{T}}_{pi}\notag,
    \end{align}
    where we use $\nabla^{p}R_{pj}=\frac{1}{2}\nabla_{j}R$, note that
    \begin{align}
        \Delta_{L}S_{ij}&=\Delta S_{ij}-R_{i}^{\ p}S_{pj}-R_{j}^{\ p}S_{pi}+2R_{pijl}S^{pl}\notag\\
        &=\Delta R_{ij}+\frac{1}{3}g_{ij}\Delta|\mathbf{T}|^{2}+2\Delta\widehat{\mathbf{T}}_{ij}-R_{i}^{\ p}\left(R_{pj}+\frac{1}{3}|\mathbf{T}|^{2}g_{pj}+2\widehat{\mathbf{T}}_{pj}\right)\notag\\
        &\quad-R_{j}^{\ p}\left(R_{ip}+\frac{1}{3}|\mathbf{T}|^{2}g_{ip}+2\widehat{\mathbf{T}}_{ip}\right)+2R_{pijl}\left(R^{pl}+\frac{1}{3}|\mathbf{T}|^{2}g^{pl}+2\widehat{\mathbf{T}}^{pl}\right)\notag\\
        &=\Delta R_{ij}+\frac{1}{3}g_{ij}\Delta|\mathbf{T}|^{2}+2\Delta\widehat{\mathbf{T}}_{ij}-R_{i}^{\ p}R_{pj}-\frac{1}{3}|\mathbf{T}|^{2}R_{ij}-2R_{i}^{\ p}\widehat{\mathbf{T}}_{pj}\notag\\
        &\quad-R_{j}^{\ p}R_{ip}-\frac{1}{3}|\mathbf{T}|^{2}R_{ji}-2R_{j}^{\ p}\widehat{\mathbf{T}}_{pi}+2R_{pijl}R^{pl}+\frac{2}{3}|\mathbf{T}|^{2}R_{ij}\notag\\
        &\quad+4R_{pijl}\widehat{\mathbf{T}}^{pl}\notag\\
        &=\Delta R_{ij}+\frac{1}{3}g_{ij}\Delta|\mathbf{T}|^{2}+2\Delta\widehat{\mathbf{T}}_{ij}-2R_{i}^{\ p}R_{pj}-2R_{i}^{\ p}\widehat{\mathbf{T}}_{pj}\notag\\
        &\quad-2R_{j}^{\ p}\widehat{\mathbf{T}}_{pi}+2R_{pijl}R^{pl}+4R_{pijl}\widehat{\mathbf{T}}^{pl}\notag.
    \end{align}
    Together with the above, we get the evolution equation of Ricci curvature.
\end{proof}

\begin{lemma}\label{lemma3.3}
Under the Laplacian $G_{2}$ flow $\eqref{The closed Laplacian flow}$, the evolution equation for the norm of Ricci curvature is the following:
    \begin{align}
    \partial_{t}|{\rm Ric}|^{2}&=\Delta|{\rm Ric}|^{2}-2|\nabla{\rm Ric}|^{2}+4R_{pijl}R^{pl}R^{ij}+\frac{4}{3}|\mathbf{T}|^{2}|{\rm Ric}|^{2}+8R_{pijl}\widehat{\mathbf{T}}^{pl}R^{ij}\notag\\
    &\quad+\frac{2}{3}R\Delta|\mathbf{T}|^{2}+4R^{ij}\Delta\widehat{\mathbf{T}}_{ij}-\frac{2}{3}R^{ij}\nabla_{i}\nabla_{j}|\mathbf{T}|^{2}-8R^{ij}\nabla_{i}\nabla^{p}\widehat{\mathbf{T}}_{pj}\notag.
    \end{align}
\end{lemma}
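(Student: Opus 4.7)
The plan is to differentiate $|{\rm Ric}|^{2}=g^{ik}g^{jl}R_{ij}R_{kl}$ in time, express everything via the metric evolution $\partial_{t}g_{ij}=-2S_{ij}$ (so $\partial_{t}g^{ij}=2S^{ij}$) and the Ricci evolution of Lemma \ref{lemma3.2}, and then repackage the plain Laplacian of $R_{ij}$ using the Bochner identity. Concretely, by the product rule and the symmetry of the Ricci tensor,
\begin{align}
\partial_{t}|{\rm Ric}|^{2}=4S^{ik}R_{i}^{\ l}R_{kl}+2R^{ij}\partial_{t}R_{ij}. \notag
\end{align}
The first piece is then unpacked with $S_{ij}=R_{ij}+\tfrac{1}{3}|\mathbf{T}|^{2}g_{ij}+2\widehat{\mathbf{T}}_{ij}$, producing the three terms $4R^{ik}R_{i}^{\ l}R_{kl}$, $\tfrac{4}{3}|\mathbf{T}|^{2}|{\rm Ric}|^{2}$ and $8\widehat{\mathbf{T}}^{ik}R_{i}^{\ l}R_{kl}$.

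Next I plug Lemma \ref{lemma3.2} into $2R^{ij}\partial_{t}R_{ij}$. The key manipulation is to split $\triangle S_{ij}=\triangle R_{ij}+\tfrac{1}{3}g_{ij}\triangle|\mathbf{T}|^{2}+2\triangle\widehat{\mathbf{T}}_{ij}$, and then apply the standard Bochner-type identity
\begin{align}
\triangle|{\rm Ric}|^{2}=2R^{ij}\triangle R_{ij}+2|\nabla{\rm Ric}|^{2}, \notag
\end{align}
which turns $2R^{ij}\triangle R_{ij}$ into the desired $\triangle|{\rm Ric}|^{2}-2|\nabla{\rm Ric}|^{2}$. The two derivative terms $-\tfrac{2}{3}R^{ij}\nabla_{i}\nabla_{j}|\mathbf{T}|^{2}$ and $-8R^{ij}\nabla_{i}\nabla^{p}\widehat{\mathbf{T}}_{pj}$ appear directly; for the latter, one uses the symmetry of $\widehat{\mathbf{T}}_{pj}$ (which follows from the antisymmetry $\mathbf{T}_{ij}=-\mathbf{T}_{ji}$ for a closed $G_{2}$-structure) together with the symmetry of $R^{ij}$ to merge the two terms of the form $\nabla_{i}\nabla^{p}\widehat{\mathbf{T}}_{pj}$ and $\nabla_{j}\nabla^{p}\widehat{\mathbf{T}}_{pi}$ into a single coefficient $-8$.

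The bookkeeping step is where most of the work is hidden: one must verify that the cubic-Ricci contribution $-4R^{ij}R_{i}^{\ p}R_{pj}$ coming from Lemma \ref{lemma3.2} exactly cancels the $+4R^{ik}R_{i}^{\ l}R_{kl}$ coming from the metric derivative, and likewise that $-4R^{ij}R_{i}^{\ p}\widehat{\mathbf{T}}_{pj}-4R^{ij}R_{j}^{\ p}\widehat{\mathbf{T}}_{pi}=-8R^{ij}R_{i}^{\ p}\widehat{\mathbf{T}}_{pj}$ cancels the $+8\widehat{\mathbf{T}}^{ik}R_{i}^{\ l}R_{kl}$ contribution. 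Both cancellations are just relabelling of contracted indices using the symmetries of ${\rm Ric}$ and $\widehat{\mathbf{T}}$, viewed as the trace cyclicity ${\rm tr}(\widehat{\mathbf{T}}{\rm Ric}^{2})={\rm tr}({\rm Ric}^{2}\widehat{\mathbf{T}})$. After these cancellations, the remaining terms read off precisely as stated: $4R_{pijl}R^{pl}R^{ij}$, $\tfrac{4}{3}|\mathbf{T}|^{2}|{\rm Ric}|^{2}$, $8R_{pijl}\widehat{\mathbf{T}}^{pl}R^{ij}$, $\tfrac{2}{3}R\triangle|\mathbf{T}|^{2}$, $4R^{ij}\triangle\widehat{\mathbf{T}}_{ij}$, $-\tfrac{2}{3}R^{ij}\nabla_{i}\nabla_{j}|\mathbf{T}|^{2}$, and $-8R^{ij}\nabla_{i}\nabla^{p}\widehat{\mathbf{T}}_{pj}$, together with the Laplacian and gradient norm. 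The only potential obstacle is tracking the symmetries correctly during the cancellation; since no commutator identities from \eqref{2.12} are needed and Lemma \ref{lemma3.2} does all the geometric work, the proof reduces to a disciplined index computation.
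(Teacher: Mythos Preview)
Your proposal is correct and follows essentially the same approach as the paper: differentiate $|{\rm Ric}|^{2}$ using $\partial_{t}g^{ij}=2S^{ij}$ and Lemma \ref{lemma3.2}, split $\triangle S_{ij}$, apply the Bochner identity $\triangle|{\rm Ric}|^{2}=2R^{ij}\triangle R_{ij}+2|\nabla{\rm Ric}|^{2}$, and observe the cancellation of the cubic-Ricci and ${\rm Ric}$--$\widehat{\mathbf{T}}$ cross terms via symmetry. The paper's proof is exactly this index computation, carried out line by line.
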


\begin{proof}
    According to Lemma \ref{lemma3.2} and by calculating yields
    \begin{align}
        \partial_{t}|{\rm Ric}|^{2}&=\partial_{t}\left(g^{ik}g^{jl}R_{ij}R_{kl}\right)=2g^{ik}\left(\partial_{t}g^{jl}\right)R_{ij}R_{kl}+2R^{ij}\partial_{t}R_{ij}\notag\\
        &=4g^{ik}S^{jl}R_{ij}R_{kl}+2R^{ij}\Big{(}\Delta S_{ij}-2R_{i}^{\ p}R_{pj}-4R_{i}^{\ p}\widehat{\mathbf{T}}_{pj}+2R_{pijl}R^{pl}\notag\\
        &\quad+4R_{pijl}\widehat{\mathbf{T}}^{pl}-\frac{1}{3}\nabla_{i}\nabla_{j}|\mathbf{T}|^{2}-4\nabla_{i}\nabla^{p}\widehat{\mathbf{T}}_{pj}\Big{)}\notag\\
        &=4R_{j}^{\ k}R_{kl}R^{lj}+\frac{4}{3}|\mathbf{T}|^{2}|{\rm Ric}|^{2}+8R_{j}^{\ k}R_{kl}\widehat{\mathbf{T}}^{jl}+2R^{ij}\Delta R_{ij}+\frac{2}{3}R\Delta|\mathbf{T}|^{2}\notag\\
        &\quad+4R^{ij}\Delta\widehat{\mathbf{T}}_{ij}-4R^{ij}R_{jp}R_{\ i}^{p}-8R^{ij}R_{i}^{\ p}\widehat{\mathbf{T}}_{pj}+4R_{pijl}R^{ij}R^{pl}\notag\\
        &\quad+8R_{pijl}R^{ij}\widehat{\mathbf{T}}^{pl}-\frac{2}{3}R^{ij}\nabla_{i}\nabla_{j}|\mathbf{T}|^{2}-8R^{ij}\nabla_{i}\nabla^{p}\widehat{\mathbf{T}}_{pj}\notag\\
        &=\Delta|{\rm Ric}|^{2}-2|\nabla{\rm Ric}|^{2}+4R_{pijl}R^{pl}R^{ij}+\frac{4}{3}|\mathbf{T}|^{2}|{\rm Ric}|^{2}+8R_{pijl}\widehat{\mathbf{T}}^{pl}R^{ij}\notag\\
        &\quad+\frac{2}{3}R\Delta|\mathbf{T}|^{2}+4R^{ij}\Delta\widehat{\mathbf{T}}_{ij}-\frac{2}{3}R^{ij}\nabla_{i}\nabla_{j}|\mathbf{T}|^{2}-8R^{ij}\nabla_{i}\nabla^{p}\widehat{\mathbf{T}}_{pj}\notag.
    \end{align}
    where the last equality {uses} $\Delta|{\rm Ric}|^{2}=2R^{ij}\Delta R_{ij}+2|\nabla{\rm Ric}|^{2}$.
\end{proof}

\begin{lemma}\label{lemma3.4}
    Under the closed Laplacian $G_{2}$ flow $\eqref{The closed Laplacian flow}$, we have
    \begin{align}
        \partial_{t}R=\Delta R+2|{\rm Ric}|^{2}-\frac{2}{3}R^{2}+4R_{ijmn}\mathbf{T}^{in}\mathbf{T}^{mj}-4\nabla^{j}\mathbf{T}_{im}\cdot\nabla^{i}\mathbf{T}^{m}_{\ \ j}.
    \end{align}
\end{lemma}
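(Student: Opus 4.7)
I would apply Lemma~\ref{lemma3.1} to the Laplacian $G_{2}$ flow and then eliminate the lone second-derivative torsion term that appears, using the divergence-free property of $\mathbf{T}$ together with the Ricci identity.

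Setting $\eta_{ij}:=-2S_{ij}$, noting $S=\tfrac{2}{3}R$ (so ${\rm tr}_{g}\eta=-\tfrac{4}{3}R$), and using the contracted Bianchi identity $\nabla^{i}R_{ij}=\tfrac{1}{2}\nabla_{j}R$ together with $|\mathbf{T}|^{2}=-R$ from (2.15), a direct computation gives
$$
({\rm div}\,\eta)_{j}=-\tfrac{1}{3}\nabla_{j}R-4\nabla^{i}\widehat{\mathbf{T}}_{ij},\qquad \langle{\rm Ric},\eta\rangle=-2|{\rm Ric}|^{2}+\tfrac{2}{3}R^{2}-4R^{ij}\widehat{\mathbf{T}}_{ij}.
$$
Feeding these into Lemma~\ref{lemma3.1} produces the intermediate identity
$$
\partial_{t}R=\triangle R-4\nabla^{j}\nabla^{i}\widehat{\mathbf{T}}_{ij}+2|{\rm Ric}|^{2}-\tfrac{2}{3}R^{2}+4R^{ij}\widehat{\mathbf{T}}_{ij}.
$$

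The heart of the proof is then to rewrite the torsion-Hessian term $\nabla^{j}\nabla^{i}\widehat{\mathbf{T}}_{ij}$. Since $\varphi$ is closed, $\mathbf{T}$ is a $2$-form and (2.11) yields $\nabla^{i}\mathbf{T}_{ij}=0$; by antisymmetry one also has $\nabla^{i}\mathbf{T}_{i}^{\ m}=0$ and $\nabla^{j}\mathbf{T}_{mj}=0$. Expanding $\widehat{\mathbf{T}}_{ij}=\mathbf{T}_{i}^{\ m}\mathbf{T}_{mj}$ by Leibniz twice and dropping the vanishing divergences, one obtains
$$
\nabla^{j}\nabla^{i}\widehat{\mathbf{T}}_{ij}=(\nabla^{j}\mathbf{T}_{i}^{\ m})(\nabla^{i}\mathbf{T}_{mj})+\mathbf{T}_{i}^{\ m}\,[\nabla^{j},\nabla^{i}]\mathbf{T}_{mj}.
$$
Applying the Ricci identity (2.12) to the commutator produces two Riemann--torsion--torsion pairings. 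One of them collapses (via antisymmetry in the last Riemann pair, which converts a $1$st--$3$rd contraction into the paper's $1$st--$4$th Ricci contraction up to a sign) to $+R^{ij}\widehat{\mathbf{T}}_{ij}$; the other, after invoking the pair swap $R_{abcd}=R_{cdab}$ together with the skew-symmetry of $\mathbf{T}$, reorganises to $-R_{ijmn}\mathbf{T}^{in}\mathbf{T}^{mj}$.

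Substituting back into the intermediate identity, the $R^{ij}\widehat{\mathbf{T}}_{ij}$ terms cancel exactly, and after the trivial raising/lowering identity $(\nabla^{j}\mathbf{T}_{i}^{\ m})(\nabla^{i}\mathbf{T}_{mj})=\nabla^{j}\mathbf{T}_{im}\cdot\nabla^{i}\mathbf{T}^{m}_{\ j}$, the desired formula drops out. The main difficulty I anticipate is the bookkeeping in the commutator step: under the convention $R_{jk}=g^{il}R_{ijkl}$ one must correctly identify which partial trace of Riemann contributes $+{\rm Ric}$ versus $-{\rm Ric}$, and one must line up the skew-symmetry of $\mathbf{T}$ with the pair-swap symmetry of Riemann at just the right place to land on the precise index pattern $R_{ijmn}\mathbf{T}^{in}\mathbf{T}^{mj}$ appearing in the statement.
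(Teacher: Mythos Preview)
Your proposal is correct and follows essentially the same route as the paper: arrive at the intermediate identity $\partial_{t}R=\triangle R+2|{\rm Ric}|^{2}-\tfrac{2}{3}R^{2}-4\nabla^{i}\nabla^{j}\widehat{\mathbf{T}}_{ij}+4R^{ij}\widehat{\mathbf{T}}_{ij}$, then rewrite $\nabla^{i}\nabla^{j}\widehat{\mathbf{T}}_{ij}$ via Leibniz, the divergence-free property (2.11), and the Ricci identity (2.12). The only cosmetic difference is that you invoke the scalar formula (3.4) of Lemma~\ref{lemma3.1} directly, whereas the paper reaches the same intermediate identity by tracing the Ricci evolution from Lemma~\ref{lemma3.2}.
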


\begin{proof}
    From Lemma \ref{lemma3.2}, we have
    \begin{align}\label{3.6}
        \partial_{t}R&=\partial_{t}\left(g^{ij}R_{ij}\right)=\left(\partial_{t}g^{ij}\right)R_{ij}+g^{ij}\partial_{t}R_{ij}\\
        &=2S^{ij}R_{ij}+g^{ij}\Big{(}\Delta S_{ij}-2R_{i}^{\ p}R_{pj}-2R_{i}^{\ p}\widehat{\mathbf{T}}_{pj}-2R_{j}^{\ p}\widehat{\mathbf{T}}_{pi}+2R_{pijl}R^{pl}\notag\\
        &\quad+4R_{pijl}\widehat{\mathbf{T}}^{pl}-\frac{1}{3}\nabla_{i}\nabla_{j}|\mathbf{T}|^{2}-2\nabla_{i}\nabla^{p}\widehat{\mathbf{T}}_{pj}-2\nabla_{j}\nabla^{p}\widehat{\mathbf{T}}_{pi}\Big{)}\notag\\
        &=\Delta R+2|{\rm Ric}|^{2}-\frac{2}{3}R^{2}-4\nabla^{i}\nabla^{j}\widehat{\mathbf{T}}_{ij}+4R^{ij}\widehat{\mathbf{T}}_{ij}\notag.
    \end{align}
    Note that
    \begin{align}
        \nabla^{i}\nabla^{j}\widehat{\mathbf{T}}_{ij}&=\nabla^{i}\nabla^{j}\left(\mathbf{T}_{im}\mathbf{T}^{m}_{\ \ j}\right)=\nabla^{i}\left(\nabla^{j}\mathbf{T}_{im}\cdot \mathbf{T}^{m}_{\ \ j}+\mathbf{T}_{im}\cdot\nabla^{j}\mathbf{T}^{m}_{\ \ j}\right)\\
        &=\nabla^{i}\nabla^{j}\mathbf{T}_{im}\cdot \mathbf{T}^{m}_{\ \ j}+\nabla^{j}\mathbf{T}_{im}\cdot\nabla^{i}\mathbf{T}^{m}_{\ \ j}\notag\\
        &=\left(\nabla^{j}\nabla^{i}\mathbf{T}_{im}-R^{ij\ p}_{\ \ i}\mathbf{T}_{pm}-R^{ij\ \ p}_{\ \ m}\mathbf{T}_{ip}\right)\mathbf{T}^{m}_{\ \ j}+\nabla^{j}\mathbf{T}_{im}\cdot\nabla^{i}\mathbf{T}^{m}_{\ \ j}\notag\\ 
        &=R^{jp}\widehat{\mathbf{T}}_{pj}-R_{ijmp}\mathbf{T}^{ip}\mathbf{T}^{mj}+\nabla^{j}\mathbf{T}_{im}\cdot\nabla^{i}\mathbf{T}^{m}_{\ \ j}\notag,
    \end{align}
    where we use $\nabla^{i}\mathbf{T}_{im}=0$ and (2.12). 
    Together with the above, we prove this lemma.
\end{proof}

In the sequel, we define
$$
\widetilde{\rm Ric}:={\rm Ric}+\frac{c}{7}g,\  \widetilde{R}:={\rm tr}\left(\widetilde{\rm Ric}\right)=R+c,
$$
where $c$ is any constant. Then
$$
|\widetilde{\rm Ric}|^{2}=|{\rm Ric}|^{2}+\frac{c^{2}}{7}+\frac{2c}{7}R.
$$
By calculating, we can get the following corollary:

\begin{corollary}\label{corollary3.5}
    Under the Laplacian $G_{2}$ flow $\eqref{The closed Laplacian flow}$, we have
    \begin{align}
        \partial_{t}|\widetilde{\rm Ric}|^{2}=\Delta |\widetilde{\rm Ric}|^{2}-2|\nabla\widetilde{\rm Ric}|^{2}+4R_{pijl}\widetilde{R}^{pl}\widetilde{R}^{ij}+I+J,
    \end{align}
    where $I,J$ are defined in $\eqref{I and J}$.
\end{corollary}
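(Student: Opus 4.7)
The plan is to differentiate the algebraic identity
$$|\widetilde{\rm Ric}|^{2}=|{\rm Ric}|^{2}+\frac{2c}{7}R+\frac{c^{2}}{7}$$
and substitute the evolution equations from Lemma \ref{lemma3.3} and Lemma \ref{lemma3.4}:
$$\partial_{t}|\widetilde{\rm Ric}|^{2}=\partial_{t}|{\rm Ric}|^{2}+\frac{2c}{7}\partial_{t}R.$$
Everything reduces to bookkeeping; no new identity beyond $\nabla g=0$ and Riemann symmetries is needed.

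Because $\nabla g=0$, the constant-multiple shift $\frac{c}{7}g$ disappears under $\nabla$, so $|\nabla\widetilde{\rm Ric}|^{2}=|\nabla{\rm Ric}|^{2}$ and $\triangle|\widetilde{\rm Ric}|^{2}=\triangle|{\rm Ric}|^{2}+\frac{2c}{7}\triangle R$. These two observations immediately account for the $\triangle|\widetilde{\rm Ric}|^{2}$ and $-2|\nabla\widetilde{\rm Ric}|^{2}$ terms on the right-hand side once the Laplacian-of-$R$ piece from Lemma \ref{lemma3.4} and the Laplacian-of-$|{\rm Ric}|^{2}$ piece from Lemma \ref{lemma3.3} are combined.

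The only nontrivial reshaping concerns the Riemann-curvature cubic. Starting from the $4R_{pijl}R^{pl}R^{ij}$ produced by Lemma \ref{lemma3.3}, I would substitute $\widetilde{R}^{ab}=R^{ab}+\frac{c}{7}g^{ab}$ and exploit the two contractions $R_{pijl}g^{pl}=R_{ij}$ and $R_{pijl}g^{ij}=R_{pl}$, which follow from the symmetries $R_{ijkl}=R_{klij}$, $R_{ijkl}=-R_{ijlk}$ together with the convention $R_{jk}=g^{il}R_{ijkl}$ recalled in Section \ref{section2}. This converts
$$4R_{pijl}\widetilde{R}^{pl}\widetilde{R}^{ij}-4R_{pijl}R^{pl}R^{ij}=\frac{8c}{7}|{\rm Ric}|^{2}+\frac{4c^{2}}{49}R,$$
so one may replace the unadorned cubic by the tilded cubic at the cost of these correction terms.

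All remaining pieces — the torsion-curvature couplings $\frac{4}{3}|\mathbf{T}|^{2}|{\rm Ric}|^{2}$ and $8R_{pijl}\widehat{\mathbf{T}}^{pl}R^{ij}$, the second-order torsion terms $\frac{2}{3}R\triangle|\mathbf{T}|^{2}$, $4R^{ij}\triangle\widehat{\mathbf{T}}_{ij}$, $-\frac{2}{3}R^{ij}\nabla_{i}\nabla_{j}|\mathbf{T}|^{2}$, $-8R^{ij}\nabla_{i}\nabla^{p}\widehat{\mathbf{T}}_{pj}$ from Lemma \ref{lemma3.3}, together with $\frac{2c}{7}$ times the non-Laplacian part of Lemma \ref{lemma3.4} (i.e.\ $2|{\rm Ric}|^{2}-\frac{2}{3}R^{2}+4R_{ijmn}\mathbf{T}^{in}\mathbf{T}^{mj}-4\nabla^{j}\mathbf{T}_{im}\nabla^{i}\mathbf{T}^{m}{}_{j}$), plus the two $c$-corrections from the previous step — are then grouped into $I$ and $J$ according to the reference equation. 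The main obstacle is purely notational: one must sort the terms (presumably algebraic torsion couplings into $I$ and differential torsion couplings into $J$) so that the split matches the authors' prior definition, since no further geometric input is required.
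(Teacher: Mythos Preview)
Your proposal is correct and follows essentially the same approach as the paper: both start from the identity $\partial_{t}|\widetilde{\rm Ric}|^{2}=\partial_{t}|{\rm Ric}|^{2}+\frac{2c}{7}\partial_{t}R$, substitute Lemmas \ref{lemma3.3} and \ref{lemma3.4}, and then perform the algebraic bookkeeping of replacing $R^{ij}$ by $\widetilde{R}^{ij}-\frac{c}{7}g^{ij}$ and $R$ by $\widetilde{R}-c$ to extract the tilded cubic $4R_{pijl}\widetilde{R}^{pl}\widetilde{R}^{ij}$ and collect the remainder into $I$ (algebraic torsion couplings, expressed in tilded variables) and $J$ (second-order torsion terms, also tilded). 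Your computation of the correction $\frac{8c}{7}|{\rm Ric}|^{2}+\frac{4c^{2}}{49}R$ and your guess about the $I$/$J$ split are both accurate.
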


\begin{proof}
According to Lemma \ref{lemma3.3} and $\eqref{3.6}$, we have
    \begin{align}
    \partial_{t}|\widetilde{\rm Ric}|^{2}&=\partial_{t}|{\rm Ric}|^{2}+\frac{2c}{7}\partial_{t}R\notag\\
    &=\Delta|{\rm Ric}|^{2}-2|\nabla{\rm Ric}|^{2}+4R_{pijl}R^{pl}R^{ij}+\frac{4}{3}|\mathbf{T}|^{2}|{\rm Ric}|^{2}+8R_{pijl}\widehat{\mathbf{T}}^{pl}R^{ij}\notag\\
    &\quad+\frac{2}{3}R\Delta|\mathbf{T}|^{2}+4R^{ij}\Delta\widehat{\mathbf{T}}_{ij}-\frac{2}{3}R^{ij}\nabla_{i}\nabla_{j}|\mathbf{T}|^{2}-8R^{ij}\nabla_{i}\nabla^{p}\widehat{\mathbf{T}}_{pj}\notag\\
    &\quad+\frac{2c}{7}\left(\Delta R+2|{\rm Ric}|^{2}-\frac{2}{3}R^{2}-4\nabla^{i}\nabla^{j}\widehat{\mathbf{T}}_{ij}+4R^{ij}\widehat{\mathbf{T}}_{ij}\right)\notag.
    \end{align}
    Note that $R=\widetilde{R}-c$ and
    $$
    |{\rm Ric}|^{2}=|\widetilde{\rm Ric}|^{2}+\frac{c^{2}}{7}-\frac{2c}{7}\widetilde{R},
    $$
    Through computing, we have
    \begin{align}
        &\quad\Delta |\widetilde{\rm Ric}|^{2}-2|\nabla\widetilde{\rm Ric}|^{2}=\Delta|{\rm Ric}|^{2}+\frac{2c}{7}\Delta R-2|\nabla{\rm Ric}|^{2},\notag\\
        &\quad4R_{pijl}R^{pl}R^{ij}+\frac{4}{3}|\mathbf{T}|^{2}|{\rm Ric}|^{2}+8R_{pijl}\widehat{\mathbf{T}}^{pl}R^{ij}\notag\\
        &=4R_{pijl}\left(\widetilde{R}^{pl}-\frac{c}{7}g^{pl}\right)\left(\widetilde{R}^{ij}-\frac{c}{7}g^{ij}\right)-\frac{4}{3}(\widetilde{R}-c)\left(|\widetilde{\rm Ric}|^{2}+\frac{c^{2}}{7}-\frac{2c}{7}\widetilde{R}\right)\notag\\
        &\quad+8R_{pijl}\widehat{\mathbf{T}}^{pl}\left(\widetilde{R}^{ij}-\frac{c}{7}g^{ij}\right)\notag\\
        &=4R_{pijl}\widetilde{R}^{pl}\widetilde{R}^{ij}-\frac{8c}{7}\widetilde{R}^{ij}\left(\widetilde{R}_{ij}-\frac{c}{7}g_{ij}\right)+\frac{4}{49}c^{2}\left(\widetilde{R}-c\right)-\frac{4}{3}\widetilde{R}|\widetilde{\rm Ric}|^{2}\notag\\
        &\quad-\frac{4}{21}c^{2}\widetilde{R}+\frac{8}{21}c\widetilde{R}^{2}+\frac{4}{3}c|\widetilde{\rm Ric}|^{2}+\frac{4}{21}c^{3}-\frac{8}{21}c^{2}\widetilde{R}+8R_{pijl}\widehat{\mathbf{T}}^{pl}\widetilde{R}^{ij}\notag\\
        &\quad-\frac{8c}{7}\widehat{\mathbf{T}}^{pl}\left(\widetilde{R}_{pl}-\frac{c}{7}g_{pl}\right)\notag\\
        &=4R_{pijl}\widetilde{R}^{pl}\widetilde{R}^{ij}-\frac{4}{3}\widetilde{R}|\widetilde{\rm Ric}|^{2}+\frac{4}{21}c|\widetilde{\rm Ric}|^{2}+8R_{pijl}\widehat{\mathbf{T}}^{pl}\widetilde{R}^{ij}-\frac{8c}{7}\widehat{\mathbf{T}}^{pl}\widetilde{R}_{pl}\notag\\
        &\quad+\frac{8c}{21}\widetilde{R}^{2}-\frac{8}{49}c^{2}\widetilde{R}-\frac{8}{147}c^{3}\notag.
    \end{align}
    and
    \begin{align}
        &\quad\frac{2}{3}R\Delta|\mathbf{T}|^{2}+4R^{ij}\Delta\widehat{\mathbf{T}}_{ij}-\frac{2}{3}R^{ij}\nabla_{i}\nabla_{j}|\mathbf{T}|^{2}-8R^{ij}\nabla_{i}\nabla^{p}\widehat{\mathbf{T}}_{pj}\notag\\
        &=\frac{2}{3}\left(\widetilde{R}-c\right)\Delta|\mathbf{T}|^{2}+4\left(\widetilde{R}^{ij}-\frac{c}{7}g^{ij}\right)\Delta\widehat{\mathbf{T}}_{ij}-\frac{2}{3}\left(\widetilde{R}^{ij}-\frac{c}{7}g^{ij}\right)\nabla_{i}\nabla_{j}|\mathbf{T}|^{2}\notag\\
        &\quad-8\left(\widetilde{R}^{ij}-\frac{c}{7}g^{ij}\right)\nabla_{i}\nabla^{p}\widehat{\mathbf{T}}_{pj}\notag\\
        &=\frac{2}{3}\widetilde{R}\Delta|\mathbf{T}|^{2}-\frac{2}{3}c\Delta|\mathbf{T}|^{2}+4\widetilde{R}^{ij}\Delta\widehat{\mathbf{T}}_{ij}+\frac{4c}{7}\Delta|\mathbf{T}|^{2}-\frac{2}{3}\widetilde{R}^{ij}\nabla_{i}\nabla_{j}|\mathbf{T}|^{2}\notag\\
        &\quad+\frac{2c}{21}\Delta|\mathbf{T}|^{2}-8\widetilde{R}^{ij}\nabla_{i}\nabla^{p}\widehat{\mathbf{T}}_{pj}+\frac{8c}{7}\nabla^{i}\nabla^{j}\widehat{\mathbf{T}}_{ij}\notag\\
        &=\frac{2}{3}\widetilde{R}\Delta|\mathbf{T}|^{2}+4\widetilde{R}^{ij}\Delta\widehat{\mathbf{T}}_{ij}-\frac{2}{3}\widetilde{R}^{ij}\nabla_{i}\nabla_{j}|\mathbf{T}|^{2}-8\widetilde{R}^{ij}\nabla_{i}\nabla^{p}\widehat{\mathbf{T}}_{pj}+\frac{8c}{7}\nabla^{i}\nabla^{j}\widehat{\mathbf{T}}_{ij}\notag,
    \end{align}
    and
    \begin{align}
        &\quad\frac{2c}{7}\left(2|{\rm Ric}|^{2}-\frac{2}{3}R^{2}-4\nabla^{i}\nabla^{j}\widehat{\mathbf{T}}_{ij}+4R^{ij}\widehat{\mathbf{T}}_{ij}\right)\\
        &=\frac{4c}{7}\left(|\widetilde{\rm Ric}|^{2}+\frac{c^{2}}{7}-\frac{2c}{7}\widetilde{R}\right)-\frac{4c}{21}\left(\widetilde{R}-c\right)\left(\widetilde{R}-c\right)-\frac{8c}{7}\nabla^{i}\nabla^{j}\widehat{\mathbf{T}}_{ij}\notag\\
        &\quad+\frac{8c}{7}\left(\widetilde{R}^{ij}-\frac{c}{7}g^{ij}\right)\widehat{\mathbf{T}}_{ij}\notag\\
        &=\frac{4c}{7}|\widetilde{\rm Ric}|^{2}+\frac{4}{49}c^{3}-\frac{8}{49}c^{2}\widetilde{R}-\frac{4c}{21}\widetilde{R}^{2}+\frac{8}{21}c^{2}\widetilde{R}-\frac{4}{21}c^{3}\notag\\
        &\quad-\frac{8c}{7}\nabla^{i}\nabla^{j}\widehat{\mathbf{T}}_{ij}+\frac{8c}{7}\widetilde{R}^{ij}\widehat{\mathbf{T}}_{ij}-\frac{8}{49}c^{2}\widetilde{R}+\frac{8}{49}c^{3}\notag\\
        &=\frac{4c}{7}|\widetilde{\rm Ric}|^{2}-\frac{8c}{7}\nabla^{i}\nabla^{j}\widehat{\mathbf{T}}_{ij}+\frac{8c}{7}\widetilde{R}^{ij}\widehat{\mathbf{T}}_{ij}-\frac{4c}{21}\widetilde{R}^{2}+\frac{8}{147}c^{2}\widetilde{R}+\frac{8}{147}c^{3}\notag.
    \end{align}
    Together with the above, we get
    \begin{align}
        \partial_{t}|\widetilde{\rm Ric}|^{2}=\Delta |\widetilde{\rm Ric}|^{2}-2|\nabla\widetilde{\rm Ric}|^{2}+4R_{pijl}\widetilde{R}^{pl}\widetilde{R}^{ij}+I+J,
    \end{align}
    where
    \begin{align}
       \label{I and J} I&=-\frac{4}{3}\widetilde{R}|\widetilde{\rm Ric}|^{2}+\frac{16c}{21}|\widetilde{\rm Ric}|^{2}+8R_{pijl}\widehat{\mathbf{T}}^{pl}\widetilde{R}^{ij}+\frac{4c}{21}\widetilde{R}^{2}-\frac{16}{147}c^{2}\widetilde{R},\\
        J&=\frac{2}{3}\widetilde{R}\Delta|\mathbf{T}|^{2}+4\widetilde{R}^{ij}\Delta\widehat{\mathbf{T}}_{ij}-\frac{2}{3}\widetilde{R}^{ij}\nabla_{i}\nabla_{j}|\mathbf{T}|^{2}-8\widetilde{R}^{ij}\nabla_{i}\nabla^{p}\widehat{\mathbf{T}}_{pj}\notag.  
    \end{align}
    Thus we prove this corollary.
\end{proof}

\begin{corollary}\label{corollary3.6}
       Under the Laplacian $G_{2}$ flow $\eqref{The closed Laplacian flow}$, we have
       \begin{align}     \partial_{t}\widetilde{R}=\Delta\widetilde{R}+2|\widetilde{\rm Ric}|^{2}+H,
       \end{align}
       where $H$ is defined in $\eqref{H}$ .
\end{corollary}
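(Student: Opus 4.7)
The plan is to obtain this identity as a direct rewriting of Lemma \ref{lemma3.4}, by substituting the shifted quantities $\widetilde{R} = R + c$ and $\widetilde{\rm Ric} = {\rm Ric} + \tfrac{c}{7} g$ into the evolution equation of $R$, and then collecting the resulting constant/linear/quadratic correction terms in $c$ into the single symbol $H$. Since $c$ is time-independent and $g$-parallel, we have the trivial observations $\partial_{t}\widetilde{R} = \partial_{t} R$ and $\triangle \widetilde{R} = \triangle R$, which will let us feed Lemma \ref{lemma3.4} directly into the left-hand side.

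First I would apply Lemma \ref{lemma3.4} to write
\begin{align}
\partial_{t}\widetilde{R} = \triangle\widetilde{R} + 2|{\rm Ric}|^{2} - \tfrac{2}{3}R^{2} + 4R_{ijmn}\mathbf{T}^{in}\mathbf{T}^{mj} - 4\nabla^{j}\mathbf{T}_{im}\nabla^{i}\mathbf{T}^{m}_{\ j}. \notag
\end{align}
Next I would use the algebraic identity $|{\rm Ric}|^{2} = |\widetilde{\rm Ric}|^{2} + \tfrac{c^{2}}{7} - \tfrac{2c}{7}\widetilde{R}$ (which is just a rearrangement of the formula stated immediately before Corollary \ref{corollary3.5}) to replace $2|{\rm Ric}|^{2}$ by $2|\widetilde{\rm Ric}|^{2}$ plus a constant and a term linear in $\widetilde{R}$. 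Simultaneously, I would expand $-\tfrac{2}{3}R^{2} = -\tfrac{2}{3}(\widetilde{R}-c)^{2}$ into a quadratic polynomial in $\widetilde{R}$.

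Collecting terms, the combined linear coefficient of $\widetilde{R}$ becomes $\tfrac{4c}{3} - \tfrac{4c}{7} = \tfrac{16c}{21}$, the constant part becomes $\tfrac{2c^{2}}{7} - \tfrac{2c^{2}}{3} = -\tfrac{8c^{2}}{21}$, and the $\widetilde{R}^{2}$ term retains coefficient $-\tfrac{2}{3}$. This gives the definition
\begin{align}
\label{H}
H = -\tfrac{2}{3}\widetilde{R}^{2} + \tfrac{16c}{21}\widetilde{R} - \tfrac{8c^{2}}{21} + 4R_{ijmn}\mathbf{T}^{in}\mathbf{T}^{mj} - 4\nabla^{j}\mathbf{T}_{im}\nabla^{i}\mathbf{T}^{m}_{\ j},
\end{align}
and the stated corollary follows immediately. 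There is no real obstacle in this proof beyond careful bookkeeping of the scalar coefficients of $c$ and $c^{2}$; the structural content is already captured by Lemma \ref{lemma3.4}, and the $\widetilde{\,\cdot\,}$ notation exists only to make the pinching quantity $|\widetilde{\rm Ric}|^{2}/\widetilde{R}^{2}$ visible in the evolution. I would also double-check consistency with the analogous computation in Corollary \ref{corollary3.5}, since the $\widetilde{R}^{2}$, $\widetilde{R}$, and $c^{2}\widetilde{R}$ coefficients appearing in $I$ there (namely $\tfrac{4c}{21}\widetilde{R}^{2}$ and $-\tfrac{16}{147}c^{2}\widetilde{R}$) should be reproduced after taking an appropriate trace, providing a built-in sanity check on the computation.
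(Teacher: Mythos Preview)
Your proposal is correct and follows essentially the same approach as the paper: both start from Lemma \ref{lemma3.4}, use $\partial_{t}\widetilde{R}=\partial_{t}R$ and $\triangle\widetilde{R}=\triangle R$, substitute $|{\rm Ric}|^{2}=|\widetilde{\rm Ric}|^{2}+\tfrac{c^{2}}{7}-\tfrac{2c}{7}\widetilde{R}$ and $R=\widetilde{R}-c$, and collect the resulting polynomial in $\widetilde{R}$ into $H$, arriving at exactly the same coefficients. The sanity check against Corollary \ref{corollary3.5} you mention is a nice extra, but the paper does not include it.
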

\begin{proof}
    According to Lemma \ref{lemma3.4}, {by} calculating
    \begin{align}
        \partial_{t}\widetilde{R}&=\partial_{t}(R+c)=\partial_{t}R\\
        &=\Delta R+2|{\rm Ric}|^{2}-\frac{2}{3}R^{2}+4R_{ijmn}\mathbf{T}^{in}\mathbf{T}^{mj}-4\nabla^{j}\mathbf{T}_{im}\cdot\nabla^{i}\mathbf{T}^{m}_{\ \ j}\notag\\
        &=\Delta\widetilde{R}+2|\widetilde{\rm Ric}|^{2}+\frac{2}{7}c^{2}-\frac{4}{7}c\widetilde{R}-\frac{2}{3}\left(\widetilde{R}-c\right)\left(\widetilde{R}-c\right)\notag\\     &\quad+4R_{ijmn}\mathbf{T}^{in}\mathbf{T}^{mj}-4\nabla^{j}\mathbf{T}_{im}\cdot\nabla^{i}\mathbf{T}^{m}_{\ \ j}\notag\\
        &=\Delta\widetilde{R}+2|\widetilde{\rm Ric}|^{2}+\frac{2}{7}c^{2}-\frac{4}{7}c\widetilde{R}-\frac{2}{3}\widetilde{R}^{2}+\frac{4c}{3}\widetilde{R}-\frac{2}{3}c^{2}\notag\\
&\quad+4R_{ijmn}\mathbf{T}^{in}\mathbf{T}^{mj}-4\nabla^{j}\mathbf{T}_{im}\cdot\nabla^{i}\mathbf{T}^{m}_{\ \ j}\notag\\
        &=\Delta\widetilde{R}+2|\widetilde{\rm Ric}|^{2}+H,\notag
    \end{align}
    where 
    \begin{align}
        \label{H} H=-\frac{2}{3}\widetilde{R}^{2}+\frac{16}{21}c\widetilde{R}-\frac{8}{21}c^{2}+4R_{ijmn}\mathbf{T}^{in}\mathbf{T}^{mj}-4\nabla^{j}\mathbf{T}_{im}\cdot\nabla^{i}\mathbf{T}^{m}_{\ \ j}.
    \end{align}
    Then we get the desired result.
\end{proof}

\section{Curvature pinching estimate}\label{section4}

In this section, we study the curvature pinching estimate and apply this estimate to prove the main results.
For any positive number $\gamma$, we define {the pinching quantity}
$$f:=\frac{|E|^{2}}{\widetilde{R}^{\gamma}}=\frac{|\widetilde{\rm Ric}|^{2}}{\widetilde{R}^{\gamma}}-\frac{1}{7}\widetilde{R}^{2-\gamma}.$$

{
\begin{remark}
    Bryant \cite{Bryant 2006} studied the pinching of Ricci curvature for closed $G_{2}$-structure and obtained an upper bound. However, our denominator in $f$ is modified scalar curvature $\tilde{R}$, which is different from scalar curvature in \cite{Bryant 2006}.
\end{remark}}

{We get the pinching quantity} $f$ satisfies the following evolution equations.

\begin{lemma}\label{lemma4.1}
   Under the Laplacian $G_{2}$ flow $\eqref{The closed Laplacian flow}$, we have
   \begin{align}
       \partial_{t}f&=\Delta f+\frac{2(\gamma-1)}{\widetilde{R}}\langle\nabla f,\nabla\widetilde{R}\rangle-\frac{2}{\widetilde{R}^{\gamma+2}}\left|\widetilde{R}\cdot\nabla\widetilde{\rm Ric}-\nabla\widetilde{R}\cdot\widetilde{\rm Ric}\right|^{2}\\
        &\quad-\frac{(2-\gamma)(\gamma-1)}{\widetilde{R}^{2}}|\nabla\widetilde{R}|^{2}f+\frac{2}{\widetilde{R}^{\gamma+1}}\bigg[-\gamma|E|^{4}+2\widetilde{R}\cdot W_{pijl}E^{pl}E^{ij}\notag\\
        &\quad-\frac{4}{5}\widetilde{R}E^{3}+\left(\frac{5}{21}-\frac{\gamma}{7}\right)\widetilde{R}^{2}|E|^{2}+\frac{c}{21}\widetilde{R}|E|^{2}-\frac{2c}{49}\widetilde{R}^{3}\bigg]\notag\\
        &\quad+\frac{1}{\widetilde{R}^{\gamma}}(I+J)-\frac{\gamma}{\widetilde{R}^{\gamma+1}}|\widetilde{\rm Ric}|^{2}H
        -\frac{(2-\gamma)}{7}\frac{H}{\widetilde{R}^{\gamma-1}}\notag.
   \end{align}
   where $E^{3}=E_{ij}E^{j}_{\ l}E^{li}$.
\end{lemma}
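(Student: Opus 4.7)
The plan is to compute $(\partial_{t}-\triangle)f$ directly from the two corollaries, then perform two algebraic reductions: first, express the Bochner-type term in a manifestly nonpositive form using a Kato-style identity; second, decompose the curvature contraction $R_{pijl}\widetilde{R}^{pl}\widetilde{R}^{ij}$ via the Weyl formula and the split $\widetilde{R}_{ij}=E_{ij}+\widetilde{R}g_{ij}/7$. Write $\phi:=|\widetilde{\rm Ric}|^{2}/\widetilde{R}^{\gamma}$, so $f=\phi-\widetilde{R}^{2-\gamma}/7$. Using the quotient rule
\begin{equation*}
(\partial_{t}-\triangle)\phi=\frac{\partial_{t}u-\triangle u}{\widetilde{R}^{\gamma}}-\frac{u(\partial_{t}v-\triangle v)}{v^{2}}+\frac{2\langle\nabla u,\nabla v\rangle}{v^{2}}-\frac{2u|\nabla v|^{2}}{v^{3}},
\end{equation*}
with $u=|\widetilde{\rm Ric}|^{2}$ and $v=\widetilde{R}^{\gamma}$, and substituting Corollaries \ref{corollary3.5} and \ref{corollary3.6}, I get a formula containing a $-2|\nabla\widetilde{\rm Ric}|^{2}/\widetilde{R}^{\gamma}$ term, the curvature contraction $4R_{pijl}\widetilde{R}^{pl}\widetilde{R}^{ij}/\widetilde{R}^{\gamma}$, and several $|\nabla\widetilde{R}|^{2}$ and cross-gradient terms.

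Next, I rewrite the Bochner term using the elementary identity
\begin{equation*}
\bigl|\widetilde{R}\,\nabla\widetilde{\rm Ric}-\nabla\widetilde{R}\otimes\widetilde{\rm Ric}\bigr|^{2}=\widetilde{R}^{2}|\nabla\widetilde{\rm Ric}|^{2}-\widetilde{R}\,\langle\nabla|\widetilde{\rm Ric}|^{2},\nabla\widetilde{R}\rangle+|\widetilde{\rm Ric}|^{2}|\nabla\widetilde{R}|^{2},
\end{equation*}
which turns $-2|\nabla\widetilde{\rm Ric}|^{2}/\widetilde{R}^{\gamma}$ into the desired term $-2|\widetilde{R}\nabla\widetilde{\rm Ric}-\nabla\widetilde{R}\otimes\widetilde{\rm Ric}|^{2}/\widetilde{R}^{\gamma+2}$ plus corrections. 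I then convert $\langle\nabla|\widetilde{\rm Ric}|^{2},\nabla\widetilde{R}\rangle$ into $\langle\nabla f,\nabla\widetilde{R}\rangle$ via
\begin{equation*}
\frac{\nabla|\widetilde{\rm Ric}|^{2}}{\widetilde{R}^{\gamma}}=\nabla f+\frac{\gamma|\widetilde{\rm Ric}|^{2}}{\widetilde{R}^{\gamma+1}}\nabla\widetilde{R}+\frac{2-\gamma}{7}\widetilde{R}^{1-\gamma}\nabla\widetilde{R},
\end{equation*}
which produces the advection coefficient $2(\gamma-1)/\widetilde{R}$. After adding $-(\partial_{t}-\triangle)(\widetilde{R}^{2-\gamma})/7$ and using $|\widetilde{\rm Ric}|^{2}/\widetilde{R}^{\gamma+2}=f/\widetilde{R}^{2}+1/(7\widetilde{R}^{\gamma})$, all the loose $|\nabla\widetilde{R}|^{2}/\widetilde{R}^{\gamma}$ terms cancel and only $-(2-\gamma)(\gamma-1)|\nabla\widetilde{R}|^{2}f/\widetilde{R}^{2}$ remains.

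For the curvature bracket, I apply the decomposition ${\rm Rm}=\frac{R}{84}g\circ g+\frac{1}{5}E\circ g+W$ from equation (2.15), contract with $\widetilde{R}^{pl}\widetilde{R}^{ij}$, and substitute $\widetilde{R}_{ij}=E_{ij}+(\widetilde{R}/7)g_{ij}$ together with $R=\widetilde{R}-c$. Tracelessness of $W$ and $E$ eliminates all Weyl terms mixed with the trace part, leaving $W_{pijl}\widetilde{R}^{pl}\widetilde{R}^{ij}=W_{pijl}E^{pl}E^{ij}$. Next I expand $|\widetilde{\rm Ric}|^{4}=(E^{2}+\widetilde{R}^{2}/7)^{2}$ and $|\widetilde{\rm Ric}|^{2}=E^{2}+\widetilde{R}^{2}/7$ in the terms $-2\gamma|\widetilde{\rm Ric}|^{4}/\widetilde{R}^{\gamma+1}$ and $-2(2-\gamma)\widetilde{R}^{1-\gamma}|\widetilde{\rm Ric}|^{2}/7$, and collect coefficients of the monomials $E^{4}$, $\widetilde{R}W(E,E)$, $\widetilde{R}E^{3}$, $\widetilde{R}^{2}E^{2}$, $c\widetilde{R}E^{2}$, $c\widetilde{R}^{3}$, and $\widetilde{R}^{4}$. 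One checks that the pure $\widetilde{R}^{4}$ coefficient reduces to $\tfrac{2}{49}-\tfrac{\gamma}{49}-\tfrac{2-\gamma}{49}=0$ (which is why no $\widetilde{R}^{4}$ term appears in the bracket), while the $\widetilde{R}^{2}E^{2}$ coefficient collapses to $\tfrac{4}{7}-\tfrac{1}{21}-\tfrac{2\gamma}{7}-\tfrac{2-\gamma}{7}=\tfrac{5}{21}-\tfrac{\gamma}{7}$, matching the formula. The $H$ contributions $-\gamma|\widetilde{\rm Ric}|^{2}H/\widetilde{R}^{\gamma+1}$ and $-(2-\gamma)\widetilde{R}^{1-\gamma}H/7$ survive unchanged, and $(I+J)/\widetilde{R}^{\gamma}$ carries over verbatim.

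The main obstacle is bookkeeping: the $\widetilde{R}^{2}E^{2}$ coefficient receives contributions from three distinct sources (the Weyl decomposition, the $|\widetilde{\rm Ric}|^{4}$ expansion, and the $\widetilde{R}^{1-\gamma}|\widetilde{\rm Ric}|^{2}$ expansion), and the $c\widetilde{R}^{3}$, $\widetilde{R}^{4}$ balances rely on exact cancellations among the $c$-dependent pieces. Keeping sign conventions consistent between the $R$ and $\widetilde{R}$ forms of the Einstein tensor and correctly exploiting the total trace-freeness of $W$ are the only real pitfalls; once these are in place, the identity reduces to elementary arithmetic verified term by term.
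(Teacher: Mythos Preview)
Your proposal is correct and follows essentially the same route as the paper: both start from Corollaries~\ref{corollary3.5} and~\ref{corollary3.6}, apply the quotient rule to $|\widetilde{\rm Ric}|^{2}/\widetilde{R}^{\gamma}$, package the gradient terms into the Kato-type square $|\widetilde{R}\nabla\widetilde{\rm Ric}-\nabla\widetilde{R}\otimes\widetilde{\rm Ric}|^{2}$, and then expand the curvature contraction via the Weyl decomposition and $\widetilde{R}_{ij}=E_{ij}+\widetilde{R}g_{ij}/7$. The only difference is organizational: the paper carries the intermediate expression $(2-\gamma)|\widetilde{\rm Ric}|^{2}(|\widetilde{\rm Ric}|^{2}-\widetilde{R}^{2}/7)-2(|\widetilde{\rm Ric}|^{4}-\widetilde{R}R_{pijl}\widetilde{R}^{pl}\widetilde{R}^{ij})$ and expands each factor separately, whereas you collect monomial coefficients directly; both reach the same bracket.
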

\begin{proof}
    From Corollary \ref{corollary3.5} and Corollary \ref{corollary3.6}, we have
    \begin{align}
        \partial_{t}\left(\frac{|\widetilde{\rm Ric}|^{2}}{\widetilde{R}^{\gamma}}\right)&=\frac{\partial_{t}|\widetilde{\rm Ric}|^{2}\cdot \widetilde{R}^{\gamma}-|\widetilde{\rm Ric}|^{2}\cdot\partial_{t}\widetilde{R}^{\gamma}}{\widetilde{R}^{2\gamma}}\\
        &=\frac{1}{\widetilde{R}^{\gamma}}\Delta|\widetilde{\rm Ric}|^{2}-\frac{2}{\widetilde{R}^{\gamma}}|\nabla\widetilde{\rm Ric}|^{2}+\frac{4}{\widetilde{R}^{\gamma}}R_{pijl}\widetilde{R}^{pl}\widetilde{R}^{ij}+\frac{1}{\widetilde{R}^{\gamma}}(I+J)\notag\\
        &\quad-\frac{\gamma}{\widetilde{R}^{\gamma+1}}|\widetilde{\rm Ric}|^{2}\Delta\widetilde{R}-\frac{2\gamma}{\widetilde{R}^{\gamma+1}}|\widetilde{\rm Ric}|^{4}-\frac{\gamma}{\widetilde{R}^{\gamma+1}}|\widetilde{\rm Ric}|^{2}H\notag\\
        &=\frac{1}{\widetilde{R}^{\gamma}}\Delta|\widetilde{\rm Ric}|^{2}-\frac{\gamma}{\widetilde{R}^{\gamma+1}}|\widetilde{\rm Ric}|^{2}\Delta\widetilde{R}-\frac{2}{\widetilde{R}^{\gamma}}|\nabla\widetilde{\rm Ric}|^{2}-\frac{2\gamma}{\widetilde{R}^{\gamma+1}}|\widetilde{\rm Ric}|^{4}\notag\\
        &\quad+\frac{4}{\widetilde{R}^{\gamma}}R_{pijl}\widetilde{R}^{pl}\widetilde{R}^{ij}+\frac{1}{\widetilde{R}^{\gamma}}(I+J)-\frac{\gamma}{\widetilde{R}^{\gamma+1}}|\widetilde{\rm Ric}|^{2}H\notag,
    \end{align}
    note that
    \begin{align}
        \nabla_{i}\left(\frac{|\widetilde{\rm Ric}|^{2}}{\widetilde{R}^{\gamma}}\right)&=\frac{\nabla_{i}|\widetilde{\rm Ric}|^{2}\cdot \widetilde{R}^{\gamma}-|\widetilde{\rm Ric}|^{2}\cdot\nabla_{i}\widetilde{R}^{\gamma}}{\widetilde{R}^{2\gamma}}=\frac{\nabla_{i}|\widetilde{\rm Ric}|^{2}}{\widetilde{R}^{\gamma}}-\frac{\gamma|\widetilde{\rm Ric}|^{2}}{\widetilde{R}^{\gamma+1}}\nabla_{i} \widetilde{R}\notag
    \end{align}
    and
    \begin{align}
        \Delta\left(\frac{|\widetilde{\rm Ric}|^{2}}{\widetilde{R}^{\gamma}}\right)&=\nabla_{i}\left(\frac{\nabla_{i}|\widetilde{\rm Ric}|^{2}}{\widetilde{R}^{\gamma}}-\frac{\gamma|\widetilde{\rm Ric}|^{2}}{\widetilde{R}^{\gamma+1}}\nabla_{i} \widetilde{R}\right)\\
        &=\frac{\Delta|\widetilde{\rm Ric}|^{2}\cdot\widetilde{R}^{\gamma}-\left\langle\nabla|\widetilde{\rm Ric}|^{2},\nabla\widetilde{R}^{\gamma}\right\rangle}{\widetilde{R}^{2\gamma}}-\frac{\gamma\left\langle\nabla|\widetilde{\rm Ric}|^{2},\nabla\widetilde{R}\right\rangle\cdot\widetilde{R}^{\gamma+1}}{\widetilde{R}^{2\gamma+2}}\notag\\
        &\quad-\frac{\gamma|\widetilde{\rm Ric}|^{2}\Delta\widetilde{R}\cdot\widetilde{R}^{\gamma+1}-\gamma|\widetilde{\rm Ric}|^{2}\left\langle\nabla R,\nabla \widetilde{R}^{\gamma+1}\right\rangle}{\widetilde{R}^{2\gamma+2}}\notag\\
        &=\frac{1}{\widetilde{R}^{\gamma}}\Delta|\widetilde{\rm Ric}|^{2}-\frac{\gamma}{\widetilde{R}^{\gamma+1}}|\widetilde{\rm Ric}|^{2}\Delta\widetilde{R}-\frac{2\gamma}{\widetilde{R}^{\gamma+1}}\left\langle\nabla|\widetilde{\rm Ric}|^{2},\nabla\widetilde{R}\right\rangle\notag\\
        &\quad+\frac{\gamma(\gamma+1)}{\widetilde{R}^{\gamma+2}}|\widetilde{\rm Ric}|^{2}|\nabla\widetilde{R}|^{2}\notag,
    \end{align}
    Thus we have 
    \begin{align}
        \partial_{t}\left(\frac{|\widetilde{\rm Ric}|^{2}}{\widetilde{R}^{\gamma}}\right)&=\Delta\left(\frac{|\widetilde{\rm Ric}|^{2}}{\widetilde{R}^{\gamma}}\right)+\frac{2\gamma}{\widetilde{R}^{\gamma+1}}\left\langle\nabla|\widetilde{\rm Ric}|^{2},\nabla\widetilde{R}\right\rangle-\frac{\gamma(\gamma+1)}{\widetilde{R}^{\gamma+2}}|\widetilde{\rm Ric}|^{2}|\nabla\widetilde{R}|^{2}\notag\\
        &\quad-\frac{2}{\widetilde{R}^{\gamma}}|\nabla\widetilde{\rm Ric}|^{2}+\frac{4}{\widetilde{R}^{\gamma}}R_{pijl}\widetilde{R}^{pl}\widetilde{R}^{ij}-\frac{2\gamma}{\widetilde{R}^{\gamma+1}}|\widetilde{\rm Ric}|^{4}\notag\\
        &\quad+\frac{1}{\widetilde{R}^{\gamma}}(I+J)-\frac{\gamma}{\widetilde{R}^{\gamma+1}}|\widetilde{\rm Ric}|^{2}H\notag\\
        &=\Delta\left(\frac{|\widetilde{\rm Ric}|^{2}}{\widetilde{R}^{\gamma}}\right)+\frac{2(\gamma-1)}{\widetilde{R}}\left\langle\nabla\left(\frac{|\widetilde{\rm Ric}|^{2}}{\widetilde{R}^{\gamma}}\right),\nabla\widetilde{R}\right\rangle+\frac{4}{\widetilde{R}^{\gamma}}R_{pijl}\widetilde{R}^{pl}\widetilde{R}^{ij}\notag\\
        &\quad-\frac{2}{\widetilde{R}^{\gamma+2}}\left|\widetilde{R}\cdot\nabla\widetilde{\rm Ric}-\nabla\widetilde{R}\cdot\widetilde{\rm Ric}\right|^{2}-\frac{(2-\gamma)(\gamma-1)}{\widetilde{R}^{\gamma+2}}|\widetilde{\rm Ric}|^{2}|\nabla\widetilde{R}|^{2}\notag\\
        &\quad-\frac{2\gamma}{\widetilde{R}^{\gamma+1}}|\widetilde{\rm Ric}|^{4}+\frac{1}{\widetilde{R}^{\gamma}}(I+J)-\frac{\gamma}{\widetilde{R}^{\gamma+1}}|\widetilde{\rm Ric}|^{2}H\notag,
    \end{align}
    and
    \begin{align}
        \partial_{t}\widetilde{R}^{2-\gamma}&=(2-\gamma)\widetilde{R}^{1-\gamma}\partial_{t}\widetilde{R}=(2-\gamma)\widetilde{R}^{1-\gamma}\left(\Delta\widetilde{R}+2|\widetilde{\rm Ric}|^{2}+H\right)\\
        &=(2-\gamma)\widetilde{R}^{1-\gamma}\Delta\widetilde{R}+2(2-\gamma)\widetilde{R}^{1-\gamma}|\widetilde{\rm Ric}|^{2}+(2-\gamma)\widetilde{R}^{1-\gamma}H\notag\\
        &=\Delta\widetilde{R}^{2-\gamma}-(2-\gamma)(1-\gamma)\widetilde{R}^{-\gamma}|\nabla\widetilde{R}|^{2}+2(2-\gamma)\widetilde{R}^{1-\gamma}|\widetilde{\rm Ric}|^{2}\notag\\
        &\quad+(2-\gamma)\widetilde{R}^{1-\gamma}H\notag,
    \end{align}
    where we use 
    $$\Delta\widetilde{R}^{2-\gamma}=\nabla_{i}\left((2-\gamma)\widetilde{R}^{1-\gamma}\cdot\nabla_{i}\widetilde{R}\right)=(2-\gamma)\widetilde{R}^{1-\gamma}\Delta\widetilde{R}+(2-\gamma)(1-\gamma)\widetilde{R}^{-\gamma}|\nabla\widetilde{R}|^{2}.$$
    Then we can calculate the evolution of $f$:
    \begin{align}
        \partial_{t}f&=\Delta f+\frac{2(\gamma-1)}{\widetilde{R}}\left\langle\nabla f,\nabla\widetilde{R}\right\rangle-\frac{2}{\widetilde{R}^{\gamma+2}}\left|\widetilde{R}\cdot\nabla\widetilde{\rm Ric}-\nabla\widetilde{R}\cdot\widetilde{\rm Ric}\right|^{2}\\
        &\quad-\frac{(2-\gamma)(\gamma-1)}{\widetilde{R}^{2}}|\nabla\widetilde{R}|^{2}f+\frac{4}{\widetilde{R}^{\gamma}}R_{pijl}\widetilde{R}^{pl}\widetilde{R}^{ij}-\frac{2\gamma}{\widetilde{R}^{\gamma+1}}|\widetilde{\rm Ric}|^{4}\notag\\
        &\quad+\frac{1}{\widetilde{R}^{\gamma}}(I+J)-\frac{\gamma}{\widetilde{R}^{\gamma+1}}|\widetilde{\rm Ric}|^{2}H-\frac{2}{7}\frac{(2-\gamma)}{\widetilde{R}^{\gamma-1}}|\widetilde{\rm Ric}|^{2}-\frac{(2-\gamma)}{7}\frac{H}{\widetilde{R}^{\gamma-1}}\notag\\
        &=\Delta f+\frac{2(\gamma-1)}{\widetilde{R}}\left\langle\nabla f,\nabla\widetilde{R}\right\rangle-\frac{2}{\widetilde{R}^{\gamma+2}}\left|\widetilde{R}\cdot\nabla\widetilde{\rm Ric}-\nabla\widetilde{R}\cdot\widetilde{\rm Ric}\right|^{2}\notag\\
        &\quad-\frac{(2-\gamma)(\gamma-1)}{\widetilde{R}^{2}}|\nabla\widetilde{R}|^{2}f+\frac{2}{\widetilde{R}^{\gamma+1}}\bigg{[}(2-\gamma)|\widetilde{\rm Ric}|^{2}\left(|\widetilde{\rm Ric}|^{2}-\frac{1}{7}\widetilde{R}^{2}\right)\notag\\
        &\quad-2\left(|\widetilde{\rm Ric}|^{4}-\widetilde{R}\cdot R_{pijl}\widetilde{R}^{pl}\widetilde{R}^{ij}\right)\bigg{]}+\frac{1}{\widetilde{R}^{\gamma}}(I+J)-\frac{\gamma}{\widetilde{R}^{\gamma+1}}|\widetilde{\rm Ric}|^{2}H\notag\\
        &\quad-\frac{(2-\gamma)}{7}\frac{H}{\widetilde{R}^{\gamma-1}}\notag.
    \end{align}
    Note that
    \begin{align}
        (2-\gamma)|\widetilde{\rm Ric}|^{2}\left(|\widetilde{\rm Ric}|^{2}-\frac{1}{7}\widetilde{R}^{2}\right)&=(2-\gamma)\left(|\widetilde{\rm Ric}|^{2}-\frac{1}{7}\widetilde{R}^{2}\right)|E|^{2}+\frac{2-\gamma}{7}\widetilde{R}^{2}|E|^{2}\notag\\
        &=(2-\gamma)|E|^{4}+\frac{2-\gamma}{7}\widetilde{R}^{2}|E|^{2}\notag,
    \end{align}
    \begin{align}
        -2|\widetilde{\rm Ric}|^{4}&=-2\Big(|\widetilde{\rm Ric}|^{4}-\frac{2}{7}\widetilde{R}^{2}|\widetilde{\rm Ric}|^{2}+\frac{1}{49}\widetilde{R}^{4}\Big)-\frac{4}{7}\widetilde{R}^{2}|\widetilde{\rm Ric}|^{2}+\frac{2}{49}\widetilde{R}^{4}\notag\\
        &=-2|E|^{4}-\frac{4}{7}\widetilde{R}^{2}\Big(|\widetilde{\rm Ric}|^{2}-\frac{1}{7}\widetilde{R}^{2}\Big)-\frac{4}{49}\widetilde{R}^{4}+\frac{2}{49}\widetilde{R}^{4}\notag\\
        &=-2|E|^{4}-\frac{4}{7}\widetilde{R}^{2}|E|^{2}-\frac{2}{49}\widetilde{R}^{4}\notag,\\
        2\widetilde{R}\cdot R_{pijl}\widetilde{R}^{pl}\widetilde{R}^{ij}&=2\widetilde{R}\cdot W_{pijl}\widetilde{R}^{pl}\widetilde{R}^{ij}+\frac{2}{5}\widetilde{R}\widetilde{R}^{pl}\widetilde{R}^{ij}\left(g_{pl}R_{ij}+g_{ij}R_{pl}\right.\notag\\
        &\quad\left.-g_{pj}R_{il}-g_{il}R_{pj}\right)-\frac{1}{15}R\widetilde{R}\widetilde{R}^{pl}\widetilde{R}^{ij}(g_{pl}g_{ij}-g_{pj}g_{il})\notag\\
        &=2\widetilde{R}\cdot W_{pijl}\widetilde{R}^{pl}\widetilde{R}^{ij}+\frac{4}{5}\widetilde{R}^{2}\widetilde{R}^{ij}\left(\widetilde{R}_{ij}-\frac{c}{7}g_{ij}\right)\notag\\
        &\quad-\frac{4}{5}\widetilde{R}\widetilde{R}_{j}^{\ l}\widetilde{R}^{ij}\left(\widetilde{R}_{il}-\frac{c}{7}g_{il}\right)-\frac{1}{15}\widetilde{R}^{3}\left(\widetilde{R}-c\right)\notag\\
        &\quad+\frac{1}{15}\left(\widetilde{R}-c\right)\widetilde{R}|\widetilde{\rm Ric}|^{2}\notag\\
        &=2\widetilde{R}\cdot W_{pijl}E^{pl}E^{ij}+\frac{13}{15}\widetilde{R}^{2}|\widetilde{\rm Ric}|^{2}-\frac{4}{5}\widetilde{R}\widetilde{\rm Ric}{}^{3}\notag\\
        &\quad+\frac{c}{21}\widetilde{R}|\widetilde{\rm Ric}|^{2}-\frac{c}{21}\widetilde{R}^{3}-\frac{1}{15}\widetilde{R}^{4}\notag,
    \end{align}
    where the last equality uses the traceless of Weyl tensor and $\widetilde{\rm Ric}{}^{3}=\widetilde{R}_{ij}\widetilde{R}^{j}_{\ l}\widetilde{R}^{li}$. Since
    \begin{align}
        \frac{13}{15}\widetilde{R}^{2}|\widetilde{\rm Ric}|^{2}&=\frac{13}{15}\widetilde{R}^{2}\left(|\widetilde{\rm Ric}|^{2}-\frac{1}{7}\widetilde{R}^{2}\right)+\frac{13}{105}\widetilde{R}^{4}=\frac{13}{15}\widetilde{R}^{2}|E|^{2}+\frac{13}{105}\widetilde{R}^{4}\notag,\\
        -\frac{4}{5}\widetilde{R}\widetilde{\rm Ric}{}^{3}&=-\frac{4}{5}\widetilde{R}\left(\widetilde{\rm Ric}{}^{3}-\frac{3}{7}\widetilde{R}|\widetilde{\rm Ric}|^{2}+\frac{2}{49}\widetilde{R}{}^{3}\right)-\frac{12}{35}\widetilde{R}^{2}|\widetilde{\rm Ric}|^{2}+\frac{8}{245}\widetilde{R}^{4}\notag\\
        &=-\frac{4}{5}\widetilde{R}E^{3}-\frac{12}{35}\widetilde{R}^{2}\left(|\widetilde{\rm Ric}|^{2}-\frac{1}{7}\widetilde{R}^{2}\right)-\frac{12}{245}\widetilde{R}^{4}+\frac{8}{245}\widetilde{R}^{4}\notag\\
        &=-\frac{4}{5}\widetilde{R}E^{3}-\frac{12}{35}\widetilde{R}^{2}|E|^{2}-\frac{4}{245}\widetilde{R}^{4}\notag,\\
        \frac{c}{21}\widetilde{R}|\widetilde{\rm Ric}|^{2}&=\frac{c}{21}\widetilde{R}\left(|\widetilde{\rm Ric}|^{2}-\frac{1}{7}\widetilde{R}^{2}\right)+\frac{c}{147}\widetilde{R}^{3}=\frac{c}{21}\widetilde{R}|E|^{2}+\frac{c}{147}\widetilde{R}^{3}\notag.
    \end{align}
    In summary, we get
    \begin{align}
        \partial_{t}f&=\Delta f+\frac{2(\gamma-1)}{\widetilde{R}}\left\langle\nabla f,\nabla\widetilde{R}\right\rangle-\frac{2}{\widetilde{R}^{\gamma+2}}\left|\widetilde{R}\cdot\nabla\widetilde{\rm Ric}-\nabla\widetilde{R}\cdot\widetilde{\rm Ric}\right|^{2}\\
        &\quad-\frac{(2-\gamma)(\gamma-1)}{\widetilde{R}^{2}}|\nabla\widetilde{R}|^{2}f+\frac{2}{\widetilde{R}^{\gamma+1}}\bigg[-\gamma|E|^{4}+2\widetilde{R}\cdot W_{pijl}E^{pl}E^{ij}\notag\\
        &\quad-\frac{4}{5}\widetilde{R}E^{3}+\left(\frac{5}{21}-\frac{\gamma}{7}\right)\widetilde{R}^{2}|E|^{2}+\frac{c}{21}\widetilde{R}|E|^{2}-\frac{2c}{49}\widetilde{R}^{3}\bigg]\notag\\
        &\quad+\frac{1}{\widetilde{R}^{\gamma}}(I+J)-\frac{\gamma}{\widetilde{R}^{\gamma+1}}|\widetilde{\rm Ric}|^{2}H
        -\frac{(2-\gamma)}{7}\frac{H}{\widetilde{R}^{\gamma-1}}\notag.
    \end{align}
    Thus we get the desired result.
\end{proof}

If we let $\gamma=2$, which means
$$f=\frac{|E|^{2}}{\widetilde{R}^{2}}=\frac{|\widetilde{\rm Ric}|^{2}}{\widetilde{R}^{2}}-\frac{1}{7},$$
then we have the following

\begin{lemma}\label{lemma4.2}
    Under the Laplacian $G_{2}$ flow $\eqref{The closed Laplacian flow}$. If there exists a positive constant $c$ independent on time $t$ such that $\widetilde{R}=R+c>0$ {and any small positive constant $\delta$ such that $f\geq \delta$}, then we have
   \begin{align}
       \partial_{t}f&\leq\Delta f+\frac{2}{\widetilde{R}}\left\langle\nabla f,\nabla\widetilde{R}\right\rangle+4\widetilde{R}f\left(-\frac{1}{2}f+Cf^{\frac{1}{2}}+C+C\frac{|W|_{C^{1}(M,g(t))}^{2}}{\widetilde{R}^{2}}\right),
   \end{align}
    where $C$  are any positive constants only dependent on $c$.
\end{lemma}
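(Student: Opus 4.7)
The plan is to specialize Lemma \ref{lemma4.1} to $\gamma = 2$ and bound the resulting right-hand side term by term. Two coefficient collapses happen immediately: $(2-\gamma)(\gamma-1) = 0$ kills the $|\nabla\widetilde{R}|^{2}f/\widetilde{R}^{2}$ term, and $(2-\gamma)/7 = 0$ kills the isolated $H/\widetilde{R}^{\gamma-1}$ piece. The gradient defect $-\frac{2}{\widetilde{R}^{4}}|\widetilde{R}\cdot\nabla\widetilde{\rm Ric}-\nabla\widetilde{R}\cdot\widetilde{\rm Ric}|^{2}$ is nonpositive, so it can be discarded. What remains is the explicit curvature bracket carrying the prefactor $2/\widetilde{R}^{3}$, the combination $(I+J)/\widetilde{R}^{2}$, and $-2|\widetilde{\rm Ric}|^{2}H/\widetilde{R}^{3}$.

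For the explicit bracket I would substitute $|E|^{2} = \widetilde{R}^{2}f$ and $|\widetilde{\rm Ric}|^{2} = \widetilde{R}^{2}(f+\tfrac{1}{7})$ throughout. The leading piece $-2|E|^{4}$ contributes $-4\widetilde{R}f^{2}$; I keep $-2\widetilde{R}f^{2}$ as the principal good term matching the target and reserve the other $-2\widetilde{R}f^{2}$ to absorb quadratic-in-$f$ errors. The cubic $-\tfrac{4}{5}\widetilde{R}E^{3}$ is controlled via $|E_{ij}E^{j}{}_{l}E^{li}| \leq |E|^{3} = \widetilde{R}^{3}f^{3/2}$, yielding a clean $O(\widetilde{R}f^{3/2})$ piece. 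The Weyl contraction $2\widetilde{R}W_{pijl}E^{pl}E^{ij}$ is bounded pointwise by $2\widetilde{R}|W||E|^{2}$, which after the overall prefactor becomes $4|W|f$; Young's inequality applied as $4|W|f \leq \epsilon\widetilde{R}f^{2} + C_{\epsilon}f|W|^{2}/\widetilde{R}$ places the first piece into the reserve and the second into the $|W|_{C^{1}}^{2}f/\widetilde{R}$ slot. The purely scalar leftovers $\widetilde{R}^{2}E^{2}$, $c\widetilde{R}E^{2}$, $c\widetilde{R}^{3}$ reduce to $O(\widetilde{R}f) + O(1)$ once we use $\widetilde{R} = R+c \leq c$, which follows from $R \leq 0$ for closed $G_{2}$-structures.

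The torsion-laden quantities $I$, $J$, and $H$ are handled via the identity $|\mathbf{T}|^{2} = -R = c - \widetilde{R} \leq c$, giving uniform control of $|\mathbf{T}|$ and $|\widehat{\mathbf{T}}|$ by constants depending only on $c$. For the Riemann factors I appeal to the Weyl decomposition in \eqref{weyl tensor}, which yields $|{\rm Rm}| \leq C(|W| + |E| + 1)$ under our hypotheses. Products such as $R_{pijl}\widehat{\mathbf{T}}^{pl}\widetilde{R}^{ij}$ are then bounded by $C(|W|+|E|+1)|\widetilde{\rm Ric}|$; dividing by $\widetilde{R}^{2}$ and using $|\widetilde{\rm Ric}|/\widetilde{R} = \sqrt{f+1/7}$ produces exactly the allowed pieces after Young. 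For $H$ the term $R_{ijmn}\mathbf{T}^{in}\mathbf{T}^{mj}$ is $O(|{\rm Rm}|)$ and $\nabla^{j}\mathbf{T}_{im}\nabla^{i}\mathbf{T}^{m}{}_{j}$ is converted via Lemma \ref{lemma2.4} into an $O(|{\rm Rm}|^{2}+1)$ expression. The second-derivative pieces in $J$ (namely $\triangle|\mathbf{T}|^{2}$, $\triangle\widehat{\mathbf{T}}$, $\nabla^{2}\widehat{\mathbf{T}}$) are expanded using Lemmas \ref{lemma2.3}--\ref{lemma2.4} to first derivatives of curvature; under the contracted second Bianchi identity these split into $|\nabla W|$ plus $|\nabla\widetilde{\rm Ric}|$ contributions, and the $|\nabla\widetilde{\rm Ric}|$ portion is dominated by a small fraction of the gradient defect we discarded earlier.

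The principal obstacle is the bookkeeping: numerous positive errors of size $\widetilde{R}f^{2}$ generated by the Young splittings must collectively fit inside the $-2\widetilde{R}f^{2}$ reserve, while the Weyl terms arising both from the explicit bracket (producing $|W|^{2}$) and from the second-derivative reductions in $J$ (producing $|\nabla W|^{2}$) must assemble exactly into $|W|_{C^{1}}^{2} = |W|^{2} + |\nabla W|^{2}$ as in (\ref{C^{1} norm}). This is precisely what forces the $C^{1}$ rather than $C^{0}$ norm of $W$ into the statement. Once all Young weights are chosen consistently and the constant pieces are swallowed into $4C\widetilde{R}f$ via $\widetilde{R} \leq c$, the surviving terms assemble into $4\widetilde{R}f\bigl(-\tfrac{1}{2}f + Cf^{1/2} + C + C|W|_{C^{1}}^{2}/\widetilde{R}^{2}\bigr)$ with $C = C(c)$.
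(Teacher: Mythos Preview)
Your overall strategy matches the paper's, but there is a genuine gap in how you treat the term $-\tfrac{2}{\widetilde{R}^{3}}|\widetilde{\rm Ric}|^{2}H$. You write that $\nabla^{j}\mathbf{T}_{im}\nabla^{i}\mathbf{T}^{m}{}_{j}$ becomes ``an $O(|{\rm Rm}|^{2}+1)$ expression'' via Lemma~\ref{lemma2.4}; an unspecified constant here is fatal. After the prefactor $\tfrac{8}{\widetilde{R}^{3}}|\widetilde{\rm Ric}|^{2}$ and the decomposition $|{\rm Rm}|^{2}=|W|^{2}+\tfrac{4}{5}|{\rm Ric}|^{2}+\cdots$, the $|{\rm Ric}|^{2}$ piece produces a positive $C\widetilde{R}f^{2}$ term whose coefficient is the $O$-constant times a fixed universal factor. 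If that exceeds $2$ it overwhelms your entire reserve of $-2\widetilde{R}f^{2}$; unlike the Young errors you mention, this contribution is not $\epsilon$-tunable but structurally fixed.

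The paper closes this by invoking the precise identity (from \cite{local curvature of G2})
\[
-\nabla^{j}\mathbf{T}_{im}\nabla^{i}\mathbf{T}^{m}{}_{\ j}=|\nabla\mathbf{T}|^{2}-\tfrac{1}{4}|{\rm Rm}|^{2}-\tfrac{1}{8}R_{ijab}R^{ijmn}\psi_{mn}{}^{ab}+\text{(lower order)},
\]
dropping the favorable $|\nabla\mathbf{T}|^{2}$, and then computing that the $|{\rm Ric}|^{2}$ coefficient in $\tfrac{1}{4}|{\rm Rm}|^{2}$ is exactly $\tfrac{1}{5}$ while the Ricci--Ricci part of the $\psi$-term vanishes by skew-symmetry. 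This keeps the resulting $\widetilde{R}f^{2}$ coefficient strictly below $2$, which is why the final inequality carries $-\tfrac{1}{2}f$ rather than the $-f$ one would naively expect from the bracket alone. Your plan needs this numerical control, either via the cited identity or by carrying out the full contraction from Lemma~\ref{lemma2.4} together with Lemma~\ref{lemma2.1} and tracking the Ricci coefficient explicitly. One minor additional point: the constant pieces are absorbed not via $\widetilde{R}\leq c$ but via the reduction to $f\geq 1$, which the paper invokes explicitly before the final assembly.
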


\begin{proof}
    Letting $\gamma=2$ in Lemma \ref{lemma4.1}, the evolution equation of $f$ is
    \begin{align}
         \partial_{t}f&=\Delta f+\frac{2}{\widetilde{R}}\left\langle\nabla f,\nabla\widetilde{R}\right\rangle-\frac{2}{\widetilde{R}^{4}}\left|\widetilde{R}\cdot\nabla\widetilde{\rm Ric}-\nabla\widetilde{R}\cdot\widetilde{\rm Ric}\right|^{2}+\frac{2}{\widetilde{R}^{3}}\bigg[-2|E|^{4}\\
        &\quad+2\widetilde{R}\cdot W_{pijl}E^{pl}E^{ij}-\frac{4}{5}\widetilde{R}E^{3}-\frac{1}{21}\widetilde{R}^{2}|E|^{2}+\frac{c}{21}\widetilde{R}|E|^{2}-\frac{2c}{49}\widetilde{R}^{3}\bigg]\notag\\
        &\quad+\frac{1}{\widetilde{R}^{2}}(I+J)-\frac{\gamma}{\widetilde{R}^{3}}|\widetilde{\rm Ric}|^{2}H
        \notag\\
        &=\Delta f+\frac{2}{\widetilde{R}}\left\langle\nabla f,\nabla\widetilde{R}\right\rangle-\frac{2}{\widetilde{R}^{4}}\left|\widetilde{R}\cdot\nabla\widetilde{\rm Ric}-\nabla\widetilde{R}\cdot\widetilde{\rm Ric}\right|^{2}\notag\\
        &\quad+4\widetilde{R}\left[-f^{2}-\frac{2}{5}\frac{E^{3}}{\widetilde{R}^{3}}-\frac{1}{42}f+\frac{1}{42}\frac{c}{\widetilde{R}}f-\frac{1}{49}\frac{c}{\widetilde{R}}+\frac{1}{\widetilde{R}^{3}}W_{pijl}E^{pl}E^{ij}\right]\notag\\
        &\quad+\frac{1}{\widetilde{R}^{2}}(I+J)-\frac{2}{\widetilde{R}^{3}}|\widetilde{\rm Ric}|^{2}H
        \notag,
    \end{align}
    To estimate the above equation, we claim that there exist some constants $C$, such that
    $$
    \left|\frac{2}{5}E^{3}\right|\leq C|E|^{3},\ \ \  |W(E,E)|\leq C|W||E|^{2},
    $$
    Then we have
    \begin{align}
        \partial_{t}f&\leq\Delta f+\frac{2}{\widetilde{R}}\left\langle\nabla f,\nabla\widetilde{R}\right\rangle-\frac{2}{\widetilde{R}^{4}}\left|\widetilde{R}\cdot\nabla\widetilde{\rm Ric}-\nabla\widetilde{R}\cdot\widetilde{\rm Ric}\right|^{2}\notag\\
        &\quad+4\widetilde{R}\left[-f^{2}+Cf^{\frac{3}{2}}-\frac{1}{42}f+\frac{1}{42}\frac{c}{\widetilde{R}}f-\frac{1}{49}\frac{c}{\widetilde{R}}+C\frac{|W|}{\widetilde{R}}f\right]\notag\\
        &\quad+\frac{1}{\widetilde{R}^{2}}(I+J)-\frac{2}{\widetilde{R}^{3}}|\widetilde{\rm Ric}|^{2}H
        \notag\\
        &\leq \Delta f+\frac{2}{\widetilde{R}}\left\langle\nabla f,\nabla\widetilde{R}\right\rangle-\frac{2}{\widetilde{R}^{4}}\left|\widetilde{R}\cdot\nabla\widetilde{\rm Ric}-\nabla\widetilde{R}\cdot\widetilde{\rm Ric}\right|^{2}\notag\\
        &\quad +4\widetilde{R}\left(-f^{2}+Cf^{\frac{3}{2}}+Cf+C+C\frac{|W|^{2}}{\widetilde{R}^{2}}f\right)+\frac{1}{\widetilde{R}^{2}}(I+J)-\frac{2}{\widetilde{R}^{3}}|\widetilde{\rm Ric}|^{2}H\notag.
    \end{align}
    
    Next we estimate the last two terms. The term involving $I$ can be estimated as follows.
    \begin{align}\label{4.91}
        \frac{1}{\widetilde{R}^{2}}I&=\frac{1}{\widetilde{R}^{2}}\left[-\frac{4}{3}\widetilde{R}|\widetilde{\rm Ric}|^{2}+\frac{16c}{21}|\widetilde{\rm Ric}|^{2}+8R_{pijl}\widehat{\mathbf{T}}^{pl}\widetilde{R}^{ij}+\frac{4c}{21}\widetilde{R}^{2}-\frac{16}{147}c^{2}\widetilde{R}\right]\\
        &=\frac{1}{\widetilde{R}^{2}}\left[-\frac{4}{3}\widetilde{R}\left(|\widetilde{\rm Ric}|^{2}-\frac{1}{7}\widetilde{R}^{2}\right)-\frac{4}{21}\widetilde{R}^{3}+\frac{16c}{21}\left(|\widetilde{\rm Ric}|^{2}-\frac{1}{7}\widetilde{R}^{2}\right)+\frac{16c}{147}\widetilde{R}^{2}\right.\notag\\
        &\quad\left.+8R_{pijl}\widehat{\mathbf{T}}^{pl}\widetilde{R}^{ij}+\frac{4c}{21}\widetilde{R}^{2}-\frac{16}{147}c^{2}\widetilde{R}\right]\notag\\
        &=\frac{1}{\widetilde{R}^{2}}\left[-\frac{4}{3}\widetilde{R}\ |E|^{2}+\frac{16c}{21}|E|^{2}-\frac{4}{21}\widetilde{R}^{3}+\frac{44c}{147}\widetilde{R}^{2} -\frac{16}{147}c^{2}\widetilde{R}+8R_{pijl}\widehat{\mathbf{T}}^{pl}\widetilde{R}^{ij}\right]\notag\\
        &=4\widetilde{R}\left(-\frac{1}{3}f+\frac{4}{21}\frac{c}{\widetilde{R}}f-\frac{1}{21}+\frac{11}{147}\frac{c}{\widetilde{R}}-\frac{4}{147}\frac{c^{2}}{\widetilde{R}^{2}}\right)+\frac{8}{\widetilde{R}^{2}}R_{pijl}\widehat{\mathbf{T}}^{pl}\widetilde{R}^{ij}\notag\\
        &\leq 4\widetilde{R}\left(Cf+Cf^{\frac{1}{2}}+C+C\frac{|W|^{2}}{\widetilde{R}^{2}}f+C\frac{|W|^{2}}{\widetilde{R}^{2}}\right)\notag.
    \end{align}
    where we use $|\mathbf{T}|^{2}=-R=-(\widetilde{R}-c)=-\widetilde{R}+c\leq c$ and
    \begin{align}\label{4.9}
        {\rm Rm}&=W+{\rm Ric}\ast g+R\ast g\ast g=W+\widetilde{\rm Ric}\ast g+\widetilde{R}\ast g\ast g+g\ast g\\
        &=W+E\ast g+\widetilde{R}\ast g\ast g+g\ast g\notag,\\\label{4.10}
        |\widetilde{\rm Ric}|&=\left|\left(\widetilde{\rm Ric}-\frac{1}{7}\widetilde{R}g\right)+\frac{1}{7}\widetilde{R}g\right|\leq C|E|+C\widetilde{R},\\
        \frac{1}{\widetilde{R}^{2}}|W||\mathbf{T}|^{2}|\widetilde{\rm Ric}|&\leq\frac{c}{\widetilde{R}^{2}}|W||\widetilde{\rm Ric}|\notag\leq \frac{C}{\widetilde{R}}|W|f^{\frac{1}{2}}+\frac{C}{\widetilde{R}}|W|\notag,\\
        \frac{1}{\widetilde{R}^{2}}|E||\mathbf{T}|^{2}|\widetilde{\rm Ric}|&\leq Cf+Cf^{\frac{1}{2}}\notag,\\
        \frac{1}{\widetilde{R}^{2}}\cdot\widetilde{R}|\mathbf{T}|^{2}|\widetilde{\rm Ric}|&\leq Cf^{\frac{1}{2}}+C\notag,\\
        \frac{1}{\widetilde{R}^{2}}|\mathbf{T}|^{2}|\widetilde{\rm Ric}|&\leq \frac{C}{\widetilde{R}}f^{\frac{1}{2}}+\frac{C}{\widetilde{R}}\notag.
    \end{align}
For the term contains $H$, we obtain
    \begin{align}\label{4.12}
        -\frac{2}{\widetilde{R}^{3}}|\widetilde{\rm Ric}|^{2}H&=-\frac{2}{\widetilde{R}^{3}}|\widetilde{\rm Ric}|^{2}\bigg(-\frac{2}{3}\widetilde{R}^{2}+\frac{16c}{21}\widetilde{R}-\frac{8}{21}c^{2}+4R_{ijmn}\mathbf{T}^{in}\mathbf{T}^{mj}\\
        &\quad-4\nabla^{j}\mathbf{T}_{im}\cdot\nabla^{i}\mathbf{T}^{m}_{\ \ j}\bigg)\notag\\
        &=-\frac{2}{\widetilde{R}^{3}}\left(|E|^{2}+\frac{1}{7}\widetilde{R}^{2}\right)\left(-\frac{2}{3}\widetilde{R}^{2}+\frac{16c}{21}\widetilde{R}-\frac{8}{21}c^{2}\right)\notag\\
        &\quad-\frac{2}{\widetilde{R}^{3}}|\widetilde{\rm Ric}|^{2}\left(4R_{ijmn}\mathbf{T}^{in}\mathbf{T}^{mj}-4\nabla^{j}\mathbf{T}_{im}\cdot\nabla^{i}\mathbf{T}^{m}_{\ \ j}\right)\notag\\
        &=4\widetilde{R}\left(\frac{1}{3}f-\frac{8}{21}\frac{c}{\widetilde{R}}f+\frac{4}{21}\frac{c^{2}}{\widetilde{R}^{2}}f+\frac{8}{147}\frac{c}{\widetilde{R}}-\frac{4}{147}\frac{c^{2}}{\widetilde{R}^{2}}+\frac{1}{21}\right)\notag\\
        &\quad-\frac{2}{\widetilde{R}^{3}}|\widetilde{\rm Ric}|^{2}\left(4R_{ijmn}\mathbf{T}^{in}\mathbf{T}^{mj}-4\nabla^{j}\mathbf{T}_{im}\cdot\nabla^{i}\mathbf{T}^{m}_{\ \ j}\right)\notag.
    \end{align}
    Using {$\eqref{4.9}$-$\eqref{4.10}$} and Cauchy-Schwarz inequality yields
    \begin{align}
        \frac{|\widetilde{\rm Ric}|^{2}}{\widetilde{R}^{3}}|W||\mathbf{T}|^{2}&\leq \frac{C}{\widetilde{R}}|W||\mathbf{T}|^{2}f+\frac{C}{\widetilde{R}}|W||\mathbf{T}|^{2}\leq \frac{C}{\widetilde{R}}|W|f+\frac{C}{\widetilde{R}}|W|\notag,\\
        \frac{|\widetilde{\rm Ric}|^{2}}{\widetilde{R}^{3}}|E||\mathbf{T}|^{2}&\leq Cf^{\frac{3}{2}}+Cf^{\frac{1}{2}},\notag\\
        \frac{|\widetilde{\rm Ric}|^{2}}{\widetilde{R}^{3}}\cdot\widetilde{R}|\mathbf{T}|^{2}&\leq Cf+C,\notag\\
        \frac{|\widetilde{\rm Ric}|^{2}}{\widetilde{R}^{3}}|\mathbf{T}|^{2}&\leq \frac{C}{\widetilde{R}}f+\frac{C}{\widetilde{R}}\notag.
    \end{align}    
    From Page 15-16 in \cite{local curvature of G2}, we have
    \begin{align}
        -\nabla^{j}\mathbf{T}_{im}\cdot\nabla^{i}\mathbf{T}^{m}_{\ \ j}&=|\nabla \mathbf{T}|^{2}-\frac{1}{4}|{\rm Rm}|^{2}-\frac{1}{8}R_{ijab}R^{ijmn}\psi_{mn}^{\ \ \ ab}+{\rm Rm}\ast \mathbf{T}\ast \mathbf{T}\notag\\
        &\quad-\frac{1}{2}|\mathbf{T}|^{4}+\frac{1}{2}|\widehat{\mathbf{T}}|^{2}+{\rm Rm}\ast \mathbf{T}\ast \mathbf{T}\ast\psi+\mathbf{T}\ast \mathbf{T}\ast \mathbf{T}\ast \mathbf{T}\ast\psi\notag.
    \end{align}
    Next we only consider the terms may contain $|{\rm Ric}|^{2}$, which are $R_{ijab}R^{ijmn}\psi_{mn}^{\ \ \ ab}$ and $|{\rm Rm}|^{2}$. Then we have the following:
    \begin{align}
        &\quad\frac{1}{25}\left(g_{il}R_{jk}+g_{jk}R_{il}-g_{ik}R_{jl}-g_{jl}R_{ik}\right)\left(g^{il}R^{jk}+g^{jk}R^{il}-g^{ik}R^{jl}-g_{jl}R^{ik}\right)\notag\\
        &=\frac{4}{5}|{\rm Ric}|^{2}+\frac{4}{25}R^{2}\notag,
    \end{align}
    Hence, Using Young's inequality, we get
    \begin{align}
    |{\rm Rm}|^{2}&=|W|^{2}+\frac{4}{5}|{\rm Ric}|^{2}+{\rm Ric}\ast R\ast g\ast g\ast g+R\ast R\ast g\ast g\ast g\ast g\notag\\
    &\leq |W|^{2}+\frac{4}{5}|{\rm Ric}|^{2}+C|{\rm Ric}||R|+C|R|^{2}\notag\\
    &\leq |W|^{2}+\frac{9}{10}|\widetilde{\rm Ric}|^{2}+C\widetilde{R}^{2}+C\notag,
    \end{align}
and since $\psi_{ijkl}$ is skew-symmetric for any two indices, then
\begin{align}
        &\quad\frac{1}{25}\left(g_{ib}R_{ja}+g_{ja}R_{ib}-g_{ia}R_{jb}-g_{jb}R_{ia}\right)\notag\\
        &\quad\cdot\left(g^{in}R^{jm}+g^{jm}R^{in}-g^{im}R^{jn}-g_{jn}R^{im}\right)\psi_{mn}^{\ \ \ ab}\notag\\
        &=\frac{4}{25}\left(R_{ja}R^{jm}\delta_{b}^{n}+R_{\ \ a}^{m}R_{b}^{\ n}+R_{a}^{\ m}R_{\ b}^{n}+R_{ib}R^{in}\delta_{a}^{m}\right)\psi_{mn}^{\ \ \ ab}\notag\\
        &=0,\notag
        \end{align}
    which means
    \begin{align}
        \left|R_{ijab}R^{ijmn}\psi_{mn}^{\ \ \ ab}\right|&\leq C|W|^{2}+C|W||{\rm Ric}|+C|R||W|+C|R||{\rm Ric}|+CR^{2}\notag\\
        &\leq C|W|^{2}+\frac{1}{10}|\widetilde{\rm Ric}|^{2}+C\widetilde{R}^{2}+C\notag,
    \end{align}
    Together with the following:
    \begin{align}
        \frac{2}{\widetilde{R}^{3}}|\widetilde{\rm Ric}|^{2}|W|^{2}&\leq\frac{2}{\widetilde{R}^{3}}|E|^{2}|W|^{2}+\frac{C}{\widetilde{R}}|W|^{2}\leq\frac{2}{\widetilde{R}}|W|^{2}f+\frac{C}{\widetilde{R}}|W|^{2}\notag,\\
        \frac{2}{\widetilde{R}^{3}}|\widetilde{\rm Ric}|^{2}\cdot|\widetilde{\rm Ric}|^{2}&\leq 2\widetilde{R}f^{2}+C\widetilde{R}f+C\widetilde{R}\notag,\\
          \frac{2}{\widetilde{R}^{3}}|\widetilde{\rm Ric}|^{2}\left(C\widetilde{R}^{2}+C\right)&\leq C\widetilde{R}f+\frac{C}{\widetilde{R}}f+\frac{C}{\widetilde{R}}+C\widetilde{R}\notag.
    \end{align}
    we have
    \begin{align}
        -\frac{2}{\widetilde{R}^{3}}|\widetilde{\rm Ric}|^{2}H
        &\leq 4\widetilde{R}\left(\frac{1}{2}f^{2}+Cf^{\frac{3}{2}}+Cf+Cf^{\frac{1}{2}}+C+C\frac{|W|^{2}}{\widetilde{R}^{2}}f+C\frac{|W|^{2}}{\widetilde{R}^{2}}\right).
    \end{align}

    For the term containing $J$, we need to calculate the following four terms:
    $$\frac{1}{\widetilde{R}^{2}}J=\frac{1}{\widetilde{R}^{2}}\left(\frac{2}{3}\widetilde{R}\Delta|\mathbf{T}|^{2}+4\widetilde{R}^{ij}\Delta\widehat{\mathbf{T}}_{ij}-\frac{2}{3}\widetilde{R}^{ij}\nabla_{i}\nabla_{j}|\mathbf{T}|^{2}-8\widetilde{R}^{ij}\nabla_{i}\nabla^{p}\widehat{\mathbf{T}}_{pj}\right)$$
    From $\eqref{2.7}$ and Lemma \ref{lemma2.4}, we get
    $$\nabla \varphi=\mathbf{T}\ast\psi,\ \ \ \ \nabla \mathbf{T}={\rm Rm}\ast\varphi+\mathbf{T}\ast \mathbf{T}\ast\varphi,$$
    which imply
    \begin{align}
       |\nabla \mathbf{T}|&\leq C|{\rm Rm}|+C|\mathbf{T}|^{2}\leq C|{\rm Rm}|,\\
        |\nabla \mathbf{T}|^{2}&\leq C|W|^{2}+C|{\rm Ric}|^{2}+CR^{2}\leq C|W|^{2}+C|\widetilde{\rm Ric}|^{2}+C\widetilde{R}^{2}+C,\notag\\
        \nabla^{2}\mathbf{T}&=\nabla{\rm Rm}\ast\varphi+{\rm Rm}\ast\nabla\varphi+\nabla \mathbf{T}\ast \mathbf{T}\ast\varphi+\mathbf{T}\ast \mathbf{T}\ast\nabla\varphi\notag\\
        &=\nabla{\rm Rm}\ast\varphi+{\rm Rm}\ast \mathbf{T}\ast\psi+{\rm Rm}\ast \mathbf{T}\ast\varphi\ast\varphi+\mathbf{T}\ast \mathbf{T}\ast \mathbf{T}\ast\varphi\ast\varphi\notag\\
        &\quad+\mathbf{T}\ast \mathbf{T}\ast \mathbf{T}\ast\psi\notag\\
        &\leq C|\nabla{\rm Rm}|+C|{\rm Rm}||\mathbf{T}|+C|\mathbf{T}|^{3}.\notag
    \end{align}
Note that
    \begin{align}
        \nabla{\rm Rm}&=\nabla W+\nabla{\rm Ric}\ast g+\nabla R\ast g\ast g\notag\\
        &=\nabla W+\nabla\widetilde{\rm Ric}\ast g+\nabla \widetilde{R}\ast g\ast g\notag\\
        &=\nabla W+\frac{1}{\widetilde{R}}\ast\left(\widetilde{R}\nabla\widetilde{\rm Ric}-\nabla\widetilde{R}\widetilde{\rm Ric}\right)\ast g+\frac{1}{\widetilde{R}}\ast\nabla\widetilde{R}\ast \widetilde{\rm Ric}\ast g+\nabla \widetilde{R}\ast g\ast g\notag,
    \end{align}
    we obtain
    \begin{align}
        \frac{1}{\widetilde{R}^{2}}\frac{2}{3}\widetilde{R}\Delta|\mathbf{T}|^{2}&\leq\frac{C}{\widetilde{R}}|\nabla \mathbf{T}|^{2}+\frac{C}{\widetilde{R}}\bigg(|\nabla W||\mathbf{T}|+\frac{1}{\widetilde{R}}|\widetilde{\rm Ric}||\nabla\widetilde{R}||\mathbf{T}|+|\nabla\widetilde{R}||\mathbf{T}|\\
        &\quad+|{\rm Rm}||\mathbf{T}|^{2}+|\mathbf{T}|^{4}\bigg)+\frac{C}{\widetilde{R}^{2}}\left|\widetilde{R}\nabla\widetilde{\rm Ric}-\nabla\widetilde{R}\widetilde{\rm Ric}\right|\cdot|\mathbf{T}|\notag\\
        &\leq\frac{1}{5}\frac{1}{\widetilde{R}^{4}}\left|\widetilde{R}\cdot\nabla\widetilde{\rm Ric}-\nabla\widetilde{R}\cdot\widetilde{\rm Ric}\right|^{2}+\frac{C}{\widetilde{R}^{2}}|{\rm Rm}|^{2}+\frac{C}{\widetilde{R}^{2}}|\nabla\widetilde{R}|^{2}\notag\\
        &\quad+\frac{C}{\widetilde{R}^{2}}|\widetilde{\rm Ric}|^{2}|\mathbf{T}|^{2}+\frac{C}{\widetilde{R}^{2}}|\nabla W|^{2}+\frac{C}{\widetilde{R}}|\nabla \mathbf{T}|^{2}+C|\mathbf{T}|^{4}+C|\mathbf{T}|^{2}.\notag
    \end{align}
    Recalling the definition of curvature decomposition, which yields
    \begin{align}
        |{\rm Rm}|^{2}&\leq C|W|^{2}+C|{\rm Ric}|^{2}+CR^{2}\leq C|W|^{2}+C|\widetilde{\rm Ric}|^{2}+C\widetilde{R}^{2}+C\notag\\
        |\nabla\widetilde{R}|&=\left|\nabla|\mathbf{T}|^{2}\right|\leq |\mathbf{T}||\nabla \mathbf{T}|\notag.
    \end{align}
    By calculating the following estimates, we have
    \begin{align}
        \frac{1}{\widetilde{R}^{2}}|{\rm Rm}|^{2}&\leq C\frac{|W|^{2}}{\widetilde{R}^{2}}+Cf+\frac{C}{\widetilde{R}^{2}}+C\notag,\\
        \frac{1}{\widetilde{R}^{2}}|\nabla\widetilde{R}|^{2}&\leq\frac{c}{\widetilde{R}^{2}}|\nabla \mathbf{T}|^{2}\leq\frac{C}{\widetilde{R}^{2}}|\rm Rm|^{2},\notag\\
        \frac{1}{\widetilde{R}^{2}}|\widetilde{\rm Ric}|^{2}|\mathbf{T}|^{2}&\leq Cf+C,\notag\\
         \frac{1}{\widetilde{R}^{2}}\left(|\mathbf{T}|^{4}+|\mathbf{T}|^{2}\right)&\leq \frac{C}{\widetilde{R}^{2}}.\notag
    \end{align}
  Thus the first term can be estimated as follows:
  \begin{align}
      \frac{1}{\widetilde{R}^{2}}\frac{2}{3}\widetilde{R}\Delta|\mathbf{T}|^{2}&\leq\frac{1}{5}\frac{1}{\widetilde{R}^{4}}\left|\widetilde{R}\cdot\nabla\widetilde{\rm Ric}-\nabla\widetilde{R}\cdot\widetilde{\rm Ric}\right|^{2}\\
      &\quad+4R\left(\frac{C\big{(}|W|^{2}+|\nabla W|^{2}\big{)}}{\widetilde{R}^{2}}+Cf+C\right)\notag.
  \end{align}
  Similar with (4.16), we have
  \begin{align}
  &\quad\frac{1}{\widetilde{R}^{2}}\left(4\widetilde{R}^{ij}\Delta\widehat{\mathbf{T}}_{ij}-\frac{2}{3}\widetilde{R}^{ij}\nabla_{i}\nabla_{j}|\mathbf{T}|^{2}-8\widetilde{R}^{ij}\nabla_{i}\nabla^{p}\widehat{\mathbf{T}}_{pj}\right)\\
  &\leq\frac{C}{\widetilde{R}^{2}}|\widetilde{\rm Ric}|\left(|\nabla^{2}\mathbf{T}||\mathbf{T}|+|\nabla \mathbf{T}|^{2}\right)\notag\\
  &\leq\frac{C}{\widetilde{R}}\left(f^{\frac{1}{2}}+C\right)\left(|\nabla^{2}\mathbf{T}||\mathbf{T}|+|\nabla \mathbf{T}|^{2}\right)\notag\\
  &\leq\frac{1}{5}\frac{1}{\widetilde{R}^{4}}\left|\widetilde{R}\cdot\nabla\widetilde{\rm Ric}-\nabla\widetilde{R}\cdot\widetilde{\rm Ric}\right|^{2}+4R\left(C(f^{\frac{1}{2}}+1)\frac{|W|^{2}+|\nabla W|^{2}}{\widetilde{R}^{2}}\right.\notag\\
  &\quad+Cf^{\frac{3}{2}}+Cf^{\frac{1}{2}}+Cf+C\bigg)\notag.
  \end{align}
  Then we can conclude that
  \begin{align}\label{4.18}
      \frac{1}{\widetilde{R}^{2}}J
  &\leq \frac{2}{5}\frac{1}{\widetilde{R}^{4}}\left|\widetilde{R}\cdot\nabla\widetilde{\rm Ric}-\nabla\widetilde{R}\cdot\widetilde{\rm Ric}\right|^{2}+4\widetilde{R}\left(Cf^{\frac{1}{2}}\frac{|W|^{2}+|\nabla W|^{2}}{\widetilde{R}^{2}}\right.\\
  &\quad\left.+C\frac{|W|^{2}+|\nabla W|^{2}}{\widetilde{R}^{2}}+Cf^{\frac{3}{2}}+Cf+Cf^{\frac{1}{2}}+C\right).\notag
  \end{align}
  
  Without loss of generality, {by scaling,} we may assume that $f\geq 1$. Combining with $\eqref{4.91}$, $\eqref{4.12}$, and $\eqref{4.18}$, we have
  \begin{align}
       \partial_{t}f&\leq\Delta f+\frac{2}{\widetilde{R}}\left\langle\nabla f,\nabla\widetilde{R}\right\rangle+4\widetilde{R}\left(-\frac{1}{2}f^{2}+Cf^{\frac{3}{2}}+Cf+C\frac{|W|_{C^{1}(M)}^{2}}{\widetilde{R}^{2}}f\right)\notag\\
       &=\Delta f+\frac{2}{\widetilde{R}}\left\langle\nabla f,\nabla\widetilde{R}\right\rangle+4\widetilde{R}f\left(-\frac{1}{2}f+Cf^{\frac{1}{2}}+C+C\frac{|W|_{C^{1}(M)}^{2}}{\widetilde{R}^{2}}\right)\notag.
  \end{align}
  where $C$ is a positive constant depending on $c$.
\end{proof}

\begin{theorem}\label{theorem4.3}
    Let $(M,\varphi(t))_{t\in[0,T)}$ be the solution of the Laplacian $G_{2}$ flow $\eqref{The closed Laplacian flow}$ on a closed $7$-dimensional manifold $M$ with $T<+\infty$, where $g(t)$ is the Riemannian metric associated with $\varphi(t)$. If there exists a positive constant $c$ independent on time $t$ such that $R(g(t))+c>0$, then there exist some constants $c_{1}=c_{1}(c,\varphi(0))\geq0$ with $f(0)\leq c_{1}^{2}$ , {$C_{1}=C_{1}(c_{1})\geq0$} and $ C_{2}=C_{2}(c)\geq0$, such that for all $t\in [0,T)$, one has 
    \begin{align}\label{4.19}
        \frac{|E(g(t))|_{g(t)}}{R(g(t))+c}\leq C_{1}+C_{2}\max_{M\times[0,t]}\frac{|W|_{C^{1}(M,g(t))}}{R(g(t))+c},
    \end{align}
    where $\displaystyle{E(g(t))={\rm Ric}(g(t))-\frac{1}{7}R(g(t))g(t)}$ is the Einstein tensor of $g(t)$.
\end{theorem}
\begin{proof}
   {Fix a time $t\in [0,T)$, for any points $(x,t)\in M\times[0,t]$ such that $f(x,t)< \delta$($\delta$ is a small positive constant satisfies $\delta<c$). The pinching equality holds $\eqref{4.19}$ automatically. We now consider points satisfying $f(x,t)\geq \delta$.}
  If we assume
    $$-\frac{1}{2}f+Cf^{\frac{1}{2}}+C+C\frac{|W|_{C^{1}(M,g(t))}^{2}}{\widetilde{R}^{2}}\leq 0,$$
    then applying the maximum principle to Lemma \ref{lemma4.2}, we get $f(t)\leq c_{1}^{2}$, Otherwise, we have
    $$-\frac{1}{2}f+Cf^{\frac{1}{2}}+C+C\frac{|W|_{C^{1}(M,g(t))}^{2}}{\widetilde{R}^{2}}> 0,$$
    which means 
    $$f^{\frac{1}{2}}\leq 2C+1+C_{2}\max_{M\times[0,t]}\frac{|W|_{C^{1}(M,g(t))}}{R+c}.$$
    Letting $C_{1}=\max\left\{c_{1},{2C+1}\right\}$, we prove this theorem.
\end{proof}

\begin{theorem}\label{theorem4.4}
     Let $(M,\varphi(t))_{t\in[0,T)}$ be the solution of the Laplacian $G_{2}$ flow $\eqref{The closed Laplacian flow}$ on a closed $7$-dimensional manifold $M$ with $T<+\infty$, where $g(t)$ is the Riemannian metric associated with $\varphi(t)$. Either one has
     $$\liminf_{t\rightarrow T}\left(\min_{M} R(g(t))\right)=-\infty$$
      or (for some positive constant $c$)
     $$R(g(t))+c>0 \ \text{on} \ M\times[0,T) \ \ \ \text{but}\ \ \limsup_{t\rightarrow T}\left(\max_{M}\frac{|W(g(t))|_{C^{1}(M,g(t))}}{R(g(t))+c}\right)=+\infty.$$
\end{theorem}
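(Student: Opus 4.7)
The plan is to prove the contrapositive: assume that neither alternative (1) nor (2) holds, and derive a contradiction with $T<+\infty$ being the maximal existence time.

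First, negating alternative (1), we have $\liminf_{t\to T}\min_M R(g(t)) > -\infty$. Since for any closed $G_{2}$-structure the scalar curvature satisfies $R(g(t)) = -|\mathbf{T}(t)|^{2}_{g(t)} \leq 0$ (Remark \ref{remark1.3}), this lower bound together with the nonpositivity of $R$ allows us to pick a positive constant $c$, depending only on $\liminf_{t\to T}\min_M R(g(t))$, such that $R(g(t)) + c > 0$ uniformly on $M\times[0,T)$. Next, negating alternative (2) for this particular $c$, we have
\[
\limsup_{t\to T}\max_{M}\frac{|W(g(t))|_{C^{1}(M,g(t))}}{R(g(t))+c} < +\infty.
\]

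Now I apply Theorem \ref{theorem4.3} with this $c$. The pinching estimate yields that $|E(g(t))|_{g(t)}/(R(g(t))+c)$ is uniformly bounded on $M\times[0,T)$. Because $R(g(t))\le 0$, we have $R(g(t))+c\le c$, so $|E(g(t))|_{g(t)}$ itself is uniformly bounded on $M\times[0,T)$. Combined with the uniform bound on $R(g(t))$, the identity ${\rm Ric}(g(t)) = E(g(t)) + \frac{1}{7}R(g(t))\,g(t)$ gives a uniform bound on $|{\rm Ric}(g(t))|_{g(t)}$ on $M\times[0,T)$.

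Finally, I invoke the long-time existence theorem of Lotay--Wei \cite{Lotay-Wei Shi-estimate} (alternatively the local curvature estimates of the second author \cite{local curvature of G2}): if the Ricci curvature remains uniformly bounded along the Laplacian $G_{2}$ flow on $[0,T)$, then the solution extends past $T$. This contradicts the assumption that $T<+\infty$ is the maximal existence time, completing the proof. The main analytic content is entirely contained in Theorem \ref{theorem4.3}; the present argument is only a bookkeeping step that combines the pinching inequality with the nonpositivity of scalar curvature and the known Ricci extension theorem, and I do not anticipate any significant obstacle beyond verifying that the constant $c$ can indeed be chosen as above once alternative (1) is negated.
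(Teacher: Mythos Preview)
Your proposal is correct and essentially identical to the paper's proof: both argue by contradiction, choose $c$ so that $R+c>0$, apply Theorem \ref{theorem4.3} to bound $|E|$ (using $R+c\le c$), and then invoke the Lotay--Wei extension theorem. The only cosmetic difference is that the paper goes one step further and bounds the full Riemann tensor via the curvature decomposition ${\rm Rm}=W+E\ast g+R\ast g\ast g$ (using that $|W|$ is also bounded), whereas you stop at the Ricci bound---both versions of the extension criterion appear in \cite{Lotay-Wei Shi-estimate}, so either route is fine.
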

\begin{proof}
    Let us assume that $R(g(t))$ has a uniformly lower bound, which means for some positive constant $c$, we have
    $$R(g(t))+c>0 \ \text{on} \ M\times[0,T).$$
    If we assume
    $$\limsup_{t\rightarrow T}\left(\max_{M}\frac{|W|_{C^{1}(M,g(t))}}{R(g(t))+c}\right)<+\infty,$$
    then Theorem \ref{theorem4.3} implies that $E$ is uniformly bounded. From the definition of curvature decomposition, we have that the Riemann curvature ${\rm Rm}$ is uniformly bounded, which contradicts Theorem 1.3 in \cite{Lotay-Wei Shi-estimate}. 
\end{proof}

\begin{theorem}\label{theorem4.5}
     Let $(M,\varphi(t))_{t\in[0,T)}$ be the solution of the Laplacian $G_{2}$ flow $\eqref{The closed Laplacian flow}$ on a closed $7$-dimensional manifold $M$ with $T<+\infty$, where $g(t)$ is the Riemannian metric associated with $\varphi(t)$. Then we have

    $(1)$ either $\displaystyle{\liminf_{t\rightarrow T}\left(\min_{M} R(g(t))\right)=-\infty}$, 

    $(2)$ or $\displaystyle{\liminf_{t\rightarrow T}\min_{M} R(g(t))>-\infty}$, but for any positive constants $C>0$ and $\delta>0$,
    $$\limsup_{t\rightarrow T}\left(\max_{M}|W(g(t))|_{C^{1}(M,g(t))}\right)>\frac{C}{(T-t)^{1-\delta}}.$$
\end{theorem}
\begin{proof}
    We consider case (2) (case (1) can be directly derived by case (2)). We assume that $\displaystyle{\liminf_{t\rightarrow T}\left(\min_{M} R(g(t))\right)>-\infty}$, i.e., 
    $$
    R(g(t))+c\geq1
    $$
    for some universal constant $c$. Since $T<\infty$, the whole Riemann curvature $\rm Rm$ must be blowup, which means $$\limsup_{t\rightarrow T}\left(\max_{M} |W(g(t))|_{C^{1}(M,g(t))}\right)=+\infty.$$
    Now if we assume 
    $$\limsup_{t\rightarrow T}\left(\max_{M}|W(g(t))|_{C^{1}(M,g(t))}\right)\leq\frac{C}{(T-t)^{1-\delta}}$$
    for some constant $C>0$ and $\delta>0$, then by Theorem \ref{theorem4.3}, we get
    \begin{align}
        |{\rm Ric}(g(t))|_{g(t)}\leq c\left[C_{1}+C_{2}\frac{C}{(T-t)^{1-\delta}}\right]\leq\frac{C_{3}}{(T-t)^{1-\delta}}\notag.
    \end{align}
    The Cauchy-Schwarz inequality and the identity $R=-|{\bf T}|^{2}$ yields
    \begin{align}       |\partial_{t}g_{ij}|&=2|S_{ij}|=2\left|R_{ij}+\frac{1}{3}|\mathbf{T}|^{2}g_{ij}+2\mathbf{T}_{ik}\mathbf{T}^{k}_{\ j}\right|\notag\\
        &\leq C_{4}|{\rm Ric}|+C_{5}|R|+C_{6}|\mathbf{T}_{ik}\mathbf{T}^{k}_{\ j}|\notag\\
        &\leq C_{7}|{\rm Ric}|+C_{8}|\mathbf{T}|^{2}\notag\\
        &\leq  C_{9}|{\rm Ric}|\notag.
    \end{align}
    These imply that
    \begin{align}
        \int_{0}^{T}|\partial_{t}g(t)|_{g(t)}dt\leq C_{9}\int_{0}^{T}|{\rm Ric}(g(t))|_{g(t)}dt\leq\int^{T}_{0}\frac{C_{10}}{(T-t)^{1-\delta}}dt:=N<+\infty\notag,
    \end{align}
    From Lemma 6.49 in \cite{Chow-Knopf 2004}, we arrive at
    \begin{align}
        e^{-N}g(0)\leq g(t)\leq e^{N}g(0)\notag
    \end{align}
    for any $t\in [0,T)$. Since the scalar curvature is also uniformly bounded on $[0, T)$, with the same discussion as Theorem 8.1 in \cite{Lotay-Wei Shi-estimate}, the solution of the Laplacian $G_{2}$ flow can be extended past $T$. This is a contradiction.
\end{proof}

\textbf{Acknowledgments.}\ \ 
The authors would like to thank Professor Bing Wang for bringing the reference \cite{Wang 2012}  to our attention. The second author thanks Professor Damin Wu, Professor Xiaokui Yang, and Professor Fangyang Zheng for useful discussions. The first author is supported by the National Key R$\&$D Program of China 2020YFA0712800 and the National Natural Science Foundation of China (NSFC12371049). The second author is funded by the Shanghai Institute for Mathematics and Interdisciplinary Sciences(SIMIS) under grant number SIMIS-ID-2024-LG.  The authors would also like to thank the referee for the valuable comments and suggestions.

\bibliographystyle{amsplain}

\end{document}